\documentclass[pdflatex,sn-mathphys-num]{sn-jnl}

\usepackage{graphicx}%
\usepackage{multirow}%
\usepackage{amssymb}
\usepackage{amsmath}
\usepackage{amsfonts}
\usepackage{mathrsfs}
\usepackage{mathtools}
\usepackage[title]{appendix}%
\usepackage{xcolor}%
\usepackage{textcomp}%
\usepackage{manyfoot}%
\usepackage{booktabs}%
\usepackage{listings}%
\usepackage{enumitem}
\usepackage{centernot}
\usepackage{tikz}
\usetikzlibrary{arrows.meta,positioning,fit,calc,decorations.pathreplacing,shapes.multipart,backgrounds}
\pgfdeclarelayer{background}
\pgfsetlayers{background,main}
\usepackage{cleveref}
\usepackage{graphicx}
\usepackage[ruled,vlined]{algorithm2e}
\DontPrintSemicolon                
\SetKwInput{KwData}{Input}
\SetKwInput{KwResult}{Output}
\SetKwInput{KwInit}{Initialization}

\newcommand{\Ord}{\mathcal{O}}
\newcommand{\R}{\mathcal{R}}
\newcommand{\Uad}{\mathcal{U}_{\mathrm{ad}}}
\newcommand{\X}{\mathcal{X}}
\newcommand{\Qt}{Q_T}
\newcommand{\B}{\mathcal{B}}
\theoremstyle{thmstyleone}
\newtheorem{theorem}{Theorem}[section]
\newtheorem{proposition}[theorem]{Proposition}
\newtheorem{lemma}[theorem]{Lemma}

\theoremstyle{definition}

\newtheorem{assumption}[theorem]{Assumption}
\theoremstyle{remark}
\newtheorem{remark}[theorem]{Remark}

\begin{document}

\title{Size-Selective Threshold Harvesting under Nonlocal Crowding and Exogenous Recruitment}

\author[1]{\fnm{Jiguang} \sur{Yu}}\email{jyu678@bu.edu}
\equalcont{These authors contributed equally to this work as co-first authors.}

\author[2]{\fnm{Louis Shuo} \sur{Wang}}\email{wang.s41@northeastern.edu}
\equalcont{These authors contributed equally to this work as co-first authors.}

\author*[3]{\fnm{Ye} \sur{Liang}}\email{ye-liang@uiowa.edu}
\equalcont{These authors contributed equally to this work as co-first authors.}

\affil[1]{\orgdiv{College of Engineering},
  \orgname{Boston University},
  \orgaddress{\city{Boston}, \postcode{02215}, \state{MA}, \country{USA}}}

\affil[2]{\orgdiv{Department of Mathematics},
  \orgname{Northeastern University},
  \orgaddress{\city{Boston}, \postcode{02115}, \state{MA}, \country{USA}}}

\affil[3]{\orgdiv{College of Engineering},
  \orgname{The University of Iowa},
  \orgaddress{\city{Iowa City}, \postcode{52242}, \state{IA},\country{USA}}}

\abstract{
We propose a nonlinear size-structured fishery model for externally recruited
stocks under size-selective harvesting. The population density satisfies a
McKendrick--von Foerster transport equation in which growth and natural
mortality depend on a nonlocal crowding index, while harvesting acts as a
bounded size-dependent mortality control. Unlike standard self-recruiting
formulations, recruitment is prescribed as a lower-boundary inflow, making the
model suitable for enhancement fisheries or analyses conditional on juvenile
input. For the no-harvest baseline, we derive the stationary size profile and
reduce the nonlinear equilibrium problem to a scalar closure equation, proving
existence and uniqueness under a net monotonicity condition. We introduce an
intrinsic replacement index and show why, in this externally forced setting, it
is a viability diagnostic rather than a persistence threshold. A formal
state--adjoint system yields a bang--bang switching rule; under weak coupling
and single crossing, the optimal policy has a threshold structure. Numerical
experiments validate the approximation and sensitivity trends.}

\keywords{size-structured population model;
Pontryagin maximum principle;
optimal harvesting;
infinite-horizon control;
fisheries management}

\maketitle

\section{Introduction}\label{sec:intro}

\begin{figure}[htbp]
    \centering
    \includegraphics[width=0.9\linewidth]{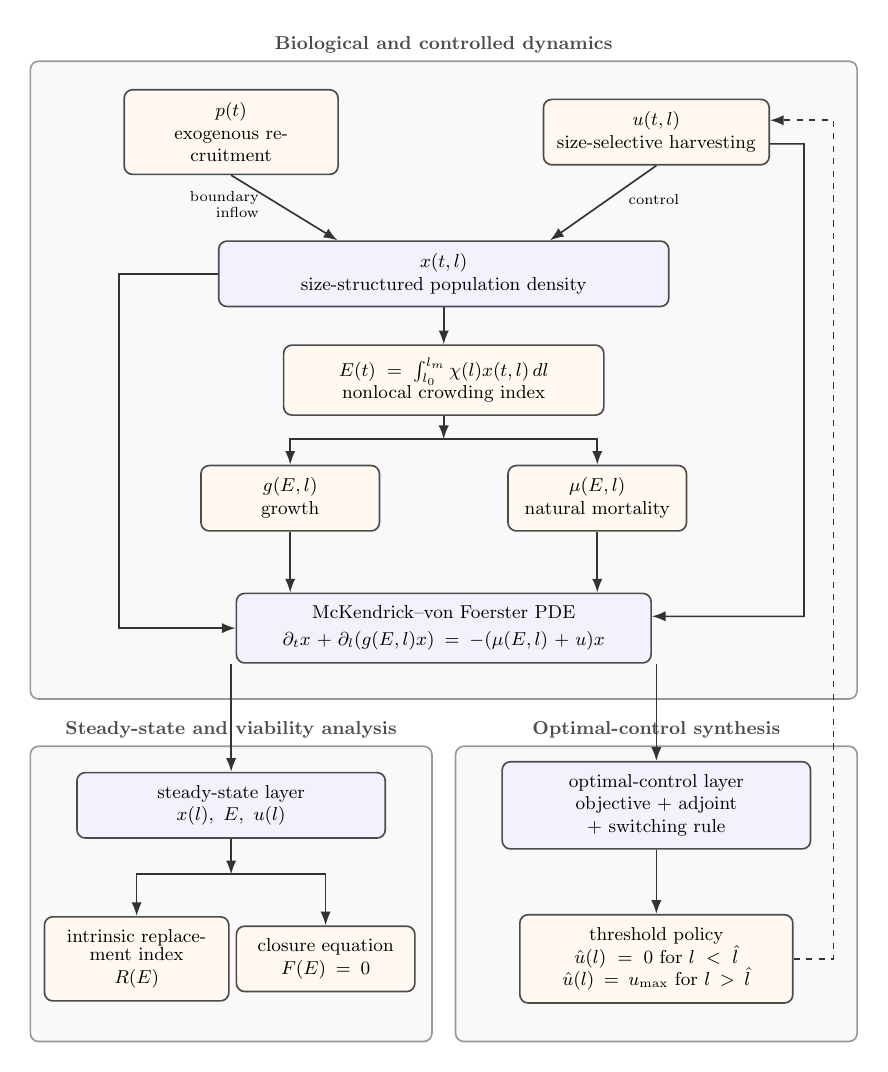}
    \caption{Mechanism diagram for the proposed size-structured fishery model. Exogenous recruitment and harvesting act on the size-structured state \(x(t,l)\). The state determines the crowding index \(E(t)\), which feeds back into growth \(g(E,l)\) and mortality \(\mu(E,l)\). The resulting PDE supports steady-state analysis through the closure equation \(F(E)=0\), biological viability assessment through the intrinsic replacement index \(R(E)\), and optimal-control analysis leading to a threshold harvesting policy.}
    \label{fig:mechanism_diagram}
\end{figure}

Marine fisheries stand at a critical bioeconomic crossroads, where the tension between maximizing economic yield and mitigating ecological collapse demands highly precise, quantitative regulation. Recent global data starkly illustrate the scale of this challenge: according to the 2025 \emph{Review of the State of World Marine Fishery Resources} by the Food and Agriculture Organization, while 64.5\% of assessed marine fish stocks are exploited within biologically sustainable levels, a concerning 35.5\% remain overfished. More alarmingly, the global prevalence of overfishing has continued to creep upward by approximately 1\% per year \cite{markus_fao_2025,wang2025analysis}. Yet, this global average masks the profound efficacy of rigorous, science-based management. In regions governed by stringent quantitative frameworks, such as the Northeast Pacific and Antarctic assessment areas, sustainability rates soar to 92.7\% and 100\%, respectively \cite{markus_fao_2025,wang2025analysis1}. These diverging outcomes provide a powerful mandate for advancing mathematical harvesting models that can tightly couple biological structure, management controls, and long-run system resilience.

The historical necessity for such models is underscored by the catastrophic depletion of heavily exploited stocks. The infamous collapse of the northwest Atlantic cod fishery serves as a canonical warning: excessive and poorly targeted fishing pressure induces not merely a temporary loss of biomass, but a profound structural truncation of the population that severely delays recovery \cite{fao_state_2022,myers_why_1997,hutchings_collapse_2000,liang2025global}. From a mathematical standpoint, such episodes prove that aggregate stock measures—treating a population as an undifferentiated mass—are fundamentally inadequate for optimal management design. Long-run viability is dictated by the demographic composition of the stock, the timing of removals, and the selectivity of the harvest. The central bioeconomic task is therefore to formulate harvesting policies that exploit the structural heterogeneity of a population efficiently without breaching ecological boundaries.

In applied fisheries management, these biological realities are typically addressed through total allowable catch limits, spatial closures, gear restrictions, and minimum landing sizes \cite{agardy_mind_2011,broadhurst_estimating_2006,yu2026beyond,costello_can_2008,hilborn_quantitative_1992,wang2026algebraic}. While indispensable and highly effective in well-managed systems \cite{hilborn_effective_2020,wang2026damage}, these conventional instruments are not, inherently, solutions to a selective optimal control problem. Aggregate catch limits, for instance, cap total removals but fail to dictate which specific individuals should be extracted. This is a critical mathematical oversight because commercial value, baseline survival, growth potential, and reproductive contribution are all highly sensitive to body size \cite{birkeland_importance_2005}. Consequently, there is a pressing need for structured bioeconomic models that treat selective harvesting as a precise mathematical control acting on a distributed population state.

Among possible structuring variables, body size is uniquely suited to fisheries optimization.
Size dictates a fish's commercial market price, its vulnerability to specific gear, and its fecundity;
in many species, larger individuals contribute disproportionately to spawning output \cite{yildiz_bio-economic_2023,yu2026rigorous,hixon_boffffs_2014,barneche_fish_2018}. 
Harvesting policies that disproportionately target these large individuals risk truncating the spawning structure and permanently diminishing stock resilience \cite{anderson_why_2008}. Size-structured models, therefore, naturally capture the fundamental bioeconomic trade-off: deciding not only how much to harvest, but precisely which size classes to target \cite{guiet_boatsv2_2024,xia_multispecies_2021}.

The mathematical foundation for this viewpoint lies in the theory of structured population dynamics. 
This theory traces its origins back to the McKendrick--von Foerster transport equation and its size-structured extensions \cite{mkendrick_applications_1925,banasiak_mckendrickvon_2025,sinko_new_1967}.
Through this mathematical framework, it formalizes how growth, mortality, and recruitment interact via the internal distribution of individuals across physiological or demographic states. 
More general frameworks for physiologically and size-structured populations were subsequently developed by Metz, Diekmann, Webb, and others \cite{metz_dynamics_1986,webb_theory_1985,barril_formulation_2022,andrusyak_mathematical_2024,clement_bifurcation_2025}. 
A particularly important extension for resource management is the incorporation of nonlocal feedback, whereby individual-level coefficients depend on aggregate characteristics of the full population rather than on local state alone \cite{goetz_forest_2010}.
In fisheries terms, such feedback can represent crowding, competition for food, habitat pressure, or other forms of density dependence acting through a population-level environmental index \cite{hritonenko_maximum_2008,hritonenko_bangbang_2009}.

Coupling this biological realism with optimal harvesting theory—rooted in the classical works of Gordon, Scott, and Clark \cite{gordon_economic_1954,scott_fishery_1955,clark_economics_1975,clark_mathematical_1976,clark_worldwide_2007}—presents significant analytical challenges. While single-biomass models are highly tractable \cite{schaefer_aspects_1991,spence_most_1975}, structured models prove that selective exploitation fundamentally alters the geometry of optimal policies \cite{beverton_dynamics_1993,reed_optimum_1980,tahvonen_economics_2009,skonhoft_optimal_2012}. Addressing these distributed-parameter systems requires infinite-dimensional variational methods and Pontryagin-type principles to derive first-order necessary conditions \cite{anita_analysis_2000,wang2026breakdown,kakumani_optimal_2026,yi_age-dependent_2025}. 
In fisheries science, these optimal-control ideas interface naturally with the Baranov formulation of fishing mortality, which treats harvest as an instantaneous mortality component added to natural losses  \cite{quinn_quantitative_1999,hilborn_quantitative_1992,yu2026fullcovariance}.
More recent studies have further examined long-run and turnpike properties of optimal harvesting in structured systems \cite{hritonenko_optimization_2006}, forest management problems \cite{cominetti_asymptotic_2009}, and population models \cite{djema_turnpike_2021}.

Density dependence is indispensable in this setting. Beyond classical stock--recruitment mechanisms such as Beverton--Holt and Ricker \cite{beverton_dynamics_1993,ricker_stock_1954,yu2026chemotactic}, fish populations commonly exhibit density-dependent growth, mortality, and maturation 
\cite{gao2022rolling,tuffley_density-dependence_2024,yu2026microscopic,accolla_density-dependent_2024,horbowy_effects_2025,wang2025multi}. 
In structured PDE models, these effects can be represented naturally through (nonlocal) environmental variables summarizing aggregate crowding or competition  \cite{de_roos_simplifying_2008,ni_optimal_2023,ainseba_population_2022}.
Another recurrent structural feature of such control problems is the emergence of bang--bang policies \cite{opmeer_optimal_2025}. 
In fisheries applications, they are naturally defined by an explicit threshold in minimum harvest size \cite{lavin_minimum_2021,clark_mathematical_1976,cai2026optimal,hritonenko_structure_2007}. 
However, a transparent analytical derivation of such threshold policies for nonlinear size-structured partial differential equations (PDEs) featuring nonlocal density dependence remains a significant gap in the literature.

This paper bridges that gap. We formulate and analyze an infinite-horizon optimal harvesting problem for a nonlinear size-structured fish population governed by a nonlocal transport equation. The state variable $x(t,l)$ represents the density of individuals of size $l$ at time $t$, and the control $u(t,l)$ dictates the size-selective harvest mortality. The population's environment is dynamically modeled through a nonlocal crowding index $E(t)$, which governs both the transport velocity (growth) and natural mortality. 
Our approach to recruitment further underscores the originality of our framework. Rather than relying on a traditional endogenous birth law, recruitment is strictly imposed through an exogenous inflow at the lower boundary. This modeling choice is appropriate for the dynamics of enhancement fisheries or heavily subsidized stocks where early-life survival is decoupled from adult biomass. Consequently, stationary regimes in our model represent operating points sustained by external inflow, and we introduce a novel intrinsic replacement capacity index that serves as a biological viability diagnostic rather than a standard, autonomous persistence threshold. 
However, the model should not be interpreted as a complete stock-assessment
model for a wild self-recruiting population. It is designed for fisheries in
which the inflow of recruits can be regarded as externally prescribed, or for
situations where the management objective is to study size-selective harvesting
conditional on a given recruitment input.

The main contributions of the paper are the following.
\begin{enumerate}[label=(\roman*)]

\item We formulate an original nonlinear size-structured PDE model that integrates nonlocal crowding, size-dependent growth and mortality, size-selective harvesting control, and strictly exogenous lower-boundary recruitment (Section~\ref{sec:model}).

\item We characterize the stationary profile under constant inflow and derive a scalar nonlinear closure equation for the crowding level, proving the existence and uniqueness of the steady state based on explicit continuity, positivity, and monotonicity conditions (Section~\ref{sec:steady_state}).

\item We introduce an intrinsic replacement index tailored to exogenously forced systems, demonstrating its utility as a biological viability diagnostic rather than a sharp persistence threshold (Section~\ref{subsec:replacement}).

\item We derive formal first-order necessary conditions of the Pontryagin type for an infinite-horizon discounted harvesting problem, yielding a coupled state–adjoint system and a pointwise switching rule for the control (Section~\ref{sec:objective}).

\item By applying a weak-coupling reduction to the adjoint equation and imposing an explicit single-crossing condition on the reduced switching function, we derive a conditional threshold characterization of the size-selective bang--bang harvesting policy (Section~\ref{sec:objective}).

\end{enumerate}

Figure~\ref{fig:mechanism_diagram} summarizes the mechanism structure of the model. The remainder of the paper is organized as follows. 
Section~\ref{sec:model} introduces the biological setting and presents the nonlinear size-structured PDE. 
Section~\ref{sec:steady_state} develops the no-harvest stationary analysis under constant external inflow, defines the intrinsic replacement index, and clarifies its interpretation in the exogenously forced setting. 
Section~\ref{sec:objective} formulates the optimal control problem,
derives the formal adjoint system, and establishes the threshold structure of the reduced optimal harvesting policy. 
Section~\ref{sec:numerical} then illustrates the analytical results in a calibrated case study, and the final section discusses limitations, fisheries implications, and possible extensions.

\section{Biological Setting and Governing Size-Structured PDE Model}\label{sec:model}

Table~\ref{tab:notation} provides a summary of notations in this work. 
\begin{table}[h]\centering
\caption{Principal notation.}\label{tab:notation}
\begin{tabular}{ll}
\toprule
Symbol & Meaning\\
\midrule
$x(t,l)$ & size-density (ind.\ cm$^{-1}$)\\
$E(t)=\int\chi x$ & nonlocal crowding index\\
$g(E,l),\mu(E,l)$ & growth speed, natural mortality\\
$u(t,l)\in[0,u_{\max}]$ & size-selective harvest control\\
$p(t)$ & exogenous boundary inflow\\
$R(E)$ & intrinsic replacement index\\
$\hat u,\hat x, \hat E, \hat l$ & value under optimal policy or optimal value \\
$u^*, x^*,E^*, N^*$ & stationary value\\
$\lambda(t,l)$ & adjoint / shadow value\\
$S=c-\lambda$ & reduced switching function; threshold $\hat E(t)$\\
$J,J_T,\widehat J_T$ & infinite-horizon, truncated, tail-corrected revenue\\
\bottomrule
\end{tabular}
\end{table}

Let $l\in[l_0,l_m]$ denote fish body length, where $l_0>0$ is the entry size and $l_m$ is the maximum considered size. Let
\[
x(t,l)\ge 0
\]
denote the population density at time $t\ge 0$ and size $l$, 
so that
$x(t,l)\,dl$ represents the number of individuals in the size interval
$(l,l+dl)$. The total population number is
\begin{equation}\label{eq:totalN}
N(t)\coloneqq \int_{l_0}^{l_m} x(t,l)\,dl.
\end{equation}

To capture the fact that competition for food, space, or oxygen depends on the
entire population composition, we introduce the nonlocal crowding index
\begin{equation}\label{eq:Edef}
E(t)\coloneqq \int_{l_0}^{l_m}\chi(l)\,x(t,l)\,dl,
\end{equation}
where $\chi(l)\ge 0$ is a weighting kernel. The choice $\chi(l)\equiv 1$
yields $E(t)=N(t)$, 
so that every individual contributes equally to the
environment. More generally, kernels such as $\chi(l)\propto l^\beta$ ($\beta>0$) allow
larger individuals to contribute more strongly to crowding, which is
appropriate when resource use or habitat demand scales with body size.

The individual growth rate along the size axis is denoted by $g(E,l)>0$. It
depends on the current crowding level $E$ and on body size $l$, 
reflecting the
biological observation that growth may slow under stronger density-dependent
competition. The natural mortality rate $\mu(E,l)\ge 0$ captures environmental
stress induced by crowding, and we impose the monotonicity condition
\begin{equation}\label{eq:mumono}
\frac{\partial \mu}{\partial E}(E,l)\ge 0,
\end{equation}
meaning that higher crowding does not decrease death risk and may increase it
through mechanisms such as starvation, disease transmission, or oxygen
depletion.

The harvesting control is denoted by $u(t,l)\in[0,u_{\max}]$, representing
size-selective fishing mortality induced by regulatory and operational
decisions, such as mesh size, targeting strategy, and seasonal closures. The
upper bound $u_{\max}$ represents the maximum admissible harvesting intensity.

The inflow of new individuals at the minimum size is modeled through the
boundary condition
\begin{equation}\label{eq:model_bc}
g(E(t),l_0)\,x(t,l_0)=p(t),
\end{equation}
where $p(t)\ge 0$ is an exogenously prescribed recruitment flux. In
fishery applications, $p(t)$ may represent hatchery stocking, the output of
an upstream nursery habitat, or an effective recruitment input summarizing
early-life survival processes that are not explicitly resolved by the model.

The key modeling distinction is that $p(t)$ is not generated
endogenously by the current adult stock through a birth law of the form
\[
g(E(t),l_0)\,x(t,l_0)=\int_{l_0}^{l_m} m(l)\,x(t,l)\,dl.
\]
This formulation is appropriate for enhancement fisheries and other settings in
which recruitment is externally controlled, but it affects the interpretation
of reproduction-type quantities, as discussed in
Section~\ref{subsec:replacement}.

Combining the above ingredients, 
the fish population dynamics are
governed by the following size-structured transport equation with nonlocal
coefficients:
\begin{equation}\label{eq:fish_model}
\left\{
\begin{aligned}
\frac{\partial x(t,l)}{\partial t}
+\frac{\partial}{\partial l}\!\Big(g(E(t),l)\,x(t,l)\Big)
&= -\Big(\mu(E(t),l)+u(t,l)\Big)x(t,l),
&& t>0,\; l\in(l_0,l_m), \\[4pt]
g(E(t),l_0)\,x(t,l_0) &= p(t),
&& t>0, \\[4pt]
x(0,l) &= \phi(l),
&& l\in[l_0,l_m].
\end{aligned}
\right.
\end{equation}
Here the crowding index $E(t)$ is defined by \eqref{eq:Edef}.

Equation \eqref{eq:fish_model} is a nonlinear McKendrick--von Foerster type
balance law in which transport along the size axis is coupled to the population
distribution through the nonlocal environment variable $E(t)$. Such structured
transport equations are standard in population dynamics; see
\cite{cushing_introduction_1998,diekmann_steady-state_2003,wang2026elliptic} for general
structured-population theory and
\cite{hritonenko_maximum_2008,hritonenko_bangbang_2009,hritonenko_mathematical_2013,ainseba_population_2022,liu2025bidirectional}
for optimal-control formulations in age- and size-structured settings.

We collect the standing regularity and structural assumptions used throughout the
paper.

\begin{assumption}[Standing assumptions]\label{ass:standing}
\begin{enumerate}[label=(H\arabic*)]
\item\label{H:reg}
The functions $g(E,l)$ and $\mu(E,l)$ are continuous on
$[0,\infty)\times[l_0,l_m]$ and continuously differentiable with respect to $E$.

\item\label{H:gpos}
There exists a constant $g_{\min}>0$ such that
\[
g(E,l)\ge g_{\min}
\qquad \text{for all } (E,l)\in[0,\infty)\times[l_0,l_m].
\]

\item\label{H:mupos}
\[
\mu(E,l)\ge 0
\qquad \text{for all } (E,l)\in[0,\infty)\times[l_0,l_m].
\]

\item\label{H:mumono}
\[
\partial_E\mu(E,l)\ge 0
\qquad \text{for all } (E,l)\in[0,\infty)\times[l_0,l_m].
\]
That is, increased crowding does not decrease natural mortality.

\item\label{H:chi}
The weighting kernel $\chi(l)$ is continuous on $[l_0,l_m]$, satisfies
$\chi(l)\ge 0$ for all $l\in[l_0,l_m]$, and is not identically zero.

\item\label{H:uadm}
The harvesting control $u$ is measurable and satisfies
\[
0\le u(t,l)\le u_{\max}
\qquad \text{for all } (t,l)\in[0,\infty)\times[l_0,l_m].
\]

\item\label{H:ic}
The initial condition satisfies $\phi\in L^1([l_0,l_m])$ and
$\phi(l)\ge 0$ for a.e.\ $l\in[l_0,l_m]$.
\end{enumerate}
\end{assumption}

In \eqref{eq:fish_model}, the term
\[
\partial_l\bigl(g(E,l)\,x(t,l)\bigr)
\]
represents deterministic transport along the size axis at rate $g(E,l)$.
The term $\mu(E,l)\,x(t,l)$ represents natural mortality, while
$u(t,l)\,x(t,l)$ represents harvest mortality. The nonlocal feedback enters
through the dependence of $g$ and $\mu$ on the aggregate crowding index $E(t)$,
which couples the dynamics of each size class to the population as a whole.

\section{Steady-State Regimes Under Constant External Inflow (No-Harvest Baseline)}
\label{sec:steady_state}

This section studies time-independent operating regimes of the model
\eqref{eq:fish_model} in the absence of harvesting. In fisheries management, a
steady state corresponds to a stationary operating point: the size distribution
is time-independent, the crowding level is constant, and the recruitment inflow
does not vary with time. Such regimes serve as reference points for regulation
and stocking design.
For completeness, we first record the stationary profile
associated with a prescribed harvesting intensity $u(l)$, but the closure
equation and the existence--uniqueness result below are established only for the
no-harvest case $u(l)\equiv 0$.

\subsection{Definition of stationary regime}\label{subsec:ss_def}

A steady state in the stationary regime is defined by
\begin{equation}\label{eq:steadystate_ansatz}
x(t,l)\equiv x(l),\qquad E(t)\equiv E,\qquad u(t,l)\equiv u(l),
\end{equation}
where $x(\cdot)$ and $u(\cdot)$ are time-independent. Substituting
\eqref{eq:steadystate_ansatz} into the boundary condition \eqref{eq:model_bc}
shows that $g(E,l_0)\,x(l_0)$ is constant, so stationarity requires
\begin{equation}\label{eq:p_constant}
p(t)\equiv p=\mathrm{const}.
\end{equation}

Under the ansatz \eqref{eq:steadystate_ansatz}, the state equation
\eqref{eq:fish_model} reduces to the first-order ordinary differential equation
\begin{equation}\label{eq:ss_ode_general}
\frac{d}{dl}\Big(g(E,l)\,x(l)\Big)
=
-\big(\mu(E,l)+u(l)\big)\,x(l),
\qquad l\in[l_0,l_m],
\end{equation}
subject to the stationary boundary inflow
\[
g(E,l_0)\,x(l_0)=p.
\]

For fixed crowding level $E$, constant inflow $p$, and prescribed stationary
harvesting profile $u(l)$, the unique solution of
\eqref{eq:ss_ode_general} is
\begin{equation}\label{eq:ss_profile}
x(l)=\frac{p}{g(E,l)}
\exp\!\left(
-\int_{l_0}^{l}\frac{\mu(E,\xi)+u(\xi)}{g(E,\xi)}\,d\xi
\right),
\qquad l\in[l_0,l_m].
\end{equation}
In the no-harvesting case
$u(l)\equiv 0$, it reduces to
\begin{equation}\label{eq:ss_profile_noharvest}
x(l)=\frac{p}{g(E,l)}
\exp\!\left(
-\int_{l_0}^{l}\frac{\mu(E,\xi)}{g(E,\xi)}\,d\xi
\right),
\qquad l\in[l_0,l_m].
\end{equation}
Stationary profiles of this form are standard in structured population theory.

In the nonlinear model, the crowding index $E$ must satisfy the
self-consistency relation \eqref{eq:Edef}. A commonly used size-weighted choice
is
\begin{equation}\label{eq:chi_l2}
\chi(l)=\chi\,l^2,\qquad \chi>0,
\end{equation}
so that
\begin{equation}\label{eq:closure_E}
E=\chi\int_{l_0}^{l_m} l^2\,x(l)\,dl.
\end{equation}
In the no-harvesting case, substituting \eqref{eq:ss_profile_noharvest} into
\eqref{eq:closure_E} yields the scalar nonlinear closure equation
\begin{equation}\label{eq:F_equation}
F(E)\coloneqq
E-\chi\int_{l_0}^{l_m}
\frac{p\,l^2}{g(E,l)}
\exp\!\left(
-\int_{l_0}^{l}\frac{\mu(E,\xi)}{g(E,\xi)}\,d\xi
\right)\,dl
=0.
\end{equation}

\subsection{Existence and uniqueness of the stationary crowding level}
\label{subsec:exist_unique}

\begin{proposition}[Existence and uniqueness]\label{prop:exist_unique}
Assume that:
\begin{enumerate}[label=(A\arabic*)]
\item\label{A:pchi}
$ p>0 $ and $ \chi>0 $;

\item\label{A:reg}
$g(E,l)$ and $\mu(E,l)$ are continuous on
$[0,\infty)\times[l_0,l_m]$, with
\[
g(E,l)\ge g_{\min}>0,
\qquad
\mu(E,l)\ge 0;
\]

\item\label{A:hazard}
\[
\partial_E\mu(E,l)\ge 0
\qquad
\text{for all } (E,l)\in[0,\infty)\times[l_0,l_m];
\]

\item\label{A:mono}
for each fixed $l\in[l_0,l_m]$, the integrand in \eqref{eq:F_equation} is
non-increasing in $E$, and therefore the map
\[
E\longmapsto
\int_{l_0}^{l_m}
\frac{p\,l^2}{g(E,l)}
\exp\!\left(
-\int_{l_0}^{l}\frac{\mu(E,\xi)}{g(E,\xi)}\,d\xi
\right)\,dl
\]
is non-increasing on $[0,\infty)$.
\end{enumerate}
Then equation \eqref{eq:F_equation} admits a unique solution $\hat E>0$.
\end{proposition}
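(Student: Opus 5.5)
Write $F(E)=E-G(E)$, where
\[
G(E)\coloneqq \chi\int_{l_0}^{l_m}\frac{p\,l^2}{g(E,l)}\exp\!\left(-\int_{l_0}^{l}\frac{\mu(E,\xi)}{g(E,\xi)}\,d\xi\right)dl .
\]
The plan is an intermediate value argument followed by a strict monotonicity argument. The proposition then reduces to three properties of $F$:
\begin{itemize}
\item[(i)] $F$ is continuous on $[0,\infty)$;
\item[(ii)] $F(0)<0$ and $F(E)>0$ for large $E$;
\item[(iii)] $F$ is strictly increasing.
\end{itemize}

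\emph{Continuity.} For fixed $l$, the integrand of $G$ is continuous in $E$ by \ref{A:reg}. The inner integral $\int_{l_0}^{l}\mu/g\,d\xi$ is continuous in $(E,l)$ because $\mu/g$ is continuous on compact sets $[0,M]\times[l_0,l_m]$ (using $g\ge g_{\min}$). The integrand is also bounded uniformly for $E\in[0,M]$: since $\mu\ge0$,
\[
0<\frac{p\,l^2}{g(E,l)}\exp(\cdots)\le \frac{p\,l_m^2}{g_{\min}} .
\]
Dominated convergence on the bounded interval $[l_0,l_m]$ then gives continuity of $G$, hence of $F$.

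\emph{Sign change.} The integrand is strictly positive, because $p>0$, $\chi>0$, $l\ge l_0>0$, and the exponential is positive. Hence $G(0)>0$ and $F(0)=-G(0)<0$. For the other end, the uniform bound gives
\[
0<G(E)\le C\coloneqq \frac{\chi\,p\,l_m^2\,(l_m-l_0)}{g_{\min}} ,
\]
so $F(E)\ge E-C>0$ for $E>C$. The intermediate value theorem yields a root $\hat E\in(0,C]$, and $\hat E>0$ because $F(0)<0$.

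\emph{Uniqueness.} By \ref{A:mono}, $G$ is non-increasing. So for $E_1<E_2$ we get $F(E_2)-F(E_1)=(E_2-E_1)-(G(E_2)-G(E_1))\ge E_2-E_1>0$, and $F$ is strictly increasing. Therefore the root is unique on $[0,\infty)$.

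\emph{Main obstacle.} The only delicate point is the justification of continuity in $E$ under the integral, i.e.\ the uniform dominating bound. This is exactly where $g\ge g_{\min}$ and $\mu\ge0$ are used. Hypothesis \ref{A:hazard} is not needed separately, since the monotonicity it would supply is already assumed in \ref{A:mono}. I would also remark that uniqueness uses only the non-increasing property of $G$, not its strict decrease; strictness comes for free from the identity term $E$ in $F$.
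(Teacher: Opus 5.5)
Your proof is correct and follows essentially the same route as the paper. Both arguments use continuity of $G$ by dominated convergence, $F(0)=-G(0)<0$, the uniform bound $G\le C$ giving $F(E)\ge E-C$, the intermediate value theorem, and strict increase of $F=E-G$ from the non-increasing $G$ in (A4); your explicit dominating bound and your note that (A3) is never used separately are only small refinements of that argument.
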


\begin{proof}
Define
\[
G(E)\coloneqq
\chi\int_{l_0}^{l_m}
\frac{p\,l^2}{g(E,l)}
\exp\!\left(
-\int_{l_0}^{l}\frac{\mu(E,\xi)}{g(E,\xi)}\,d\xi
\right)\,dl,
\]
so that $F(E)=E-G(E)$.
By assumption \ref{A:reg} and the dominated convergence theorem, $G$ is
continuous on $[0,\infty)$, hence so is $F$.

Because $p>0$, $\chi>0$, and the integrand defining $G(E)$ is strictly positive,
we have $G(0)>0$, and therefore
\[
F(0)=-G(0)<0.
\]
Moreover, since $g(E,l)\ge g_{\min}$ and the exponential factor is bounded above
by $1$,
\[
0\le G(E)\le
\chi\int_{l_0}^{l_m}\frac{p\,l^2}{g_{\min}}\,dl
\eqqcolon C<\infty.
\]
Hence
\[
F(E)=E-G(E)\ge E-C\to+\infty
\qquad\text{as } E\to\infty.
\]
By the intermediate value theorem, there exists at least one
$\hat E>0$ such that $F(\hat E)=0$.

Finally, assumption \ref{A:mono} implies that $G$ is non-increasing on
$[0,\infty)$, so $F(E)=E-G(E)$ is strictly increasing. Therefore $F$ can cross
zero at most once, and the root $\hat E$ is unique.
\end{proof}

The unique stationary crowding level $\hat E$ is obtained by solving
\eqref{eq:F_equation}. The corresponding stationary size distribution in the
no-harvesting case is then recovered by substituting $E=\hat E$ into
\eqref{eq:ss_profile_noharvest}. 
This profile represents a long-run baseline
regime of the fishery under constant external inflow $p$ and no harvesting, and
it provides a reference configuration against which management interventions may
be assessed.

\subsection{Preliminary analysis of the endogenous-recruitment closure}\label{subsec:endo}
Replacing the exogenous inflow by the endogenous boundary law
\begin{equation}\label{eq:endo_bc}
g(E,l_0)x(l_0)=\B(S),\qquad S=\int_{l_{\mathrm{mat}}}^{l_m}m(l)x(l)\,dl,
\end{equation}
with $\B\in C^1$, $\B(0)=0$, $\B'(0)=b>0$, and survival
$\Pi(E,l)=\exp(-\int_{l_0}^l\mu/g)$, the stationary profile is
$x(l)=\B(S)\Pi(E,l)/g(E,l)$, and self-consistency gives
\begin{equation}\label{eq:endo_closure}
S=\B(S)R(E),\qquad E=\B(S)Q(E),\qquad
Q(E)=\chi\!\int_{l_0}^{l_m}\frac{l^2\Pi(E,l)}{g(E,l)}\,dl .
\end{equation}
where $R$ is given in \eqref{eq:RE}.
\begin{proposition}[Trivial state, bifurcation, sharp threshold]\label{prop:endo}
The trivial state $S=0$ (with $E=0$) always solves \eqref{eq:endo_closure}, and a
nontrivial stationary stock bifurcates from it iff $\ b\,R(0)>1$. If $\B$ is
Beverton--Holt, $\B(S)=bS/(1+S/\kappa)$, with $E\mapsto R(E),Q(E)$ non-increasing,
the nontrivial state is unique when it exists; the Ricker form
$\B(S)=bSe^{-S/\kappa}$ may give overcompensation/multiplicity.
\end{proposition}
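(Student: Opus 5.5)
Presumably $R(E)=\int_{l_{\mathrm{mat}}}^{l_m} m(l)\,\Pi(E,l)/g(E,l)\,dl$, as in \eqref{eq:RE}; the plan uses only that $R$ and $Q$ are continuous, positive, and (where assumed) non-increasing. The first step is to reduce \eqref{eq:endo_closure} to a single scalar equation. Set $B=\B(S)$. Then $S=B\,R(E)$ and $E=B\,Q(E)$, and the trivial solution is immediate: $S=0$ gives $B=\B(0)=0$, hence $E=0$.

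For nontrivial states with $B>0$, the plan is to parametrize by $E$ rather than by $S$. Since $E\mapsto E/Q(E)$ is continuous with value $0$ at $E=0$, the second equation gives $B=E/Q(E)$. The first then gives $S=E\,R(E)/Q(E)=:\sigma(E)$. The remaining condition is $\B(\sigma(E))=E/Q(E)$, which we write as
\[
\Phi(E):=\frac{\B(\sigma(E))\,Q(E)}{E}=1,\qquad E>0 .
\]
To get the bifurcation criterion, expand as $E\to0^+$. We have $\sigma(E)\to0$ and $\B(\sigma)=b\sigma+o(\sigma)$. Hence $\Phi(E)=b\,R(E)\,(1+o(1))\to b\,R(0)$.

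If $bR(0)>1$, we need $\Phi$ to drop below $1$ for large $E$. This holds if $\B$ is bounded, as for Beverton--Holt and Ricker, since $\Phi(E)\le \sup\B\cdot Q(0)/E\to0$ when $Q$ is non-increasing. For general $\B$ I would instead invoke a local Crandall--Rabinowitz or implicit-function argument on the scalar equation $S=\B(S)R(E(S))$. Here $E(S)$ is obtained from $E=\B(S)Q(E)$ via the implicit function theorem near $0$, which works because $\partial_E[E-\B(S)Q(E)]=1$ at the origin. Dividing by $S$ gives $1=\frac{\B(S)}{S}R(E(S))$. At $S=0$ the right-hand side equals $bR(0)$, so a branch of nontrivial solutions emanates from the trivial one exactly when this quantity passes through $1$. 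I read ``bifurcates iff $bR(0)>1$'' as saying that small positive solutions exist precisely on the side $bR(0)>1$, with $bR(0)$ viewed as the bifurcation parameter.

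For the converse we need that no small nontrivial state exists when $bR(0)\le1$. If $bR(0)<1$, continuity gives $\Phi<1$ near $0$. The borderline case $bR(0)=1$ will be handled by the Beverton--Holt concavity below, or otherwise excluded in the transversal sense. I expect making this ``iff'' precise in the general $C^1$ case to be the main obstacle; I would state it as a local transcritical bifurcation with $bR(0)$ as the parameter.

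For Beverton--Holt uniqueness, use $\B(S)/S=b/(1+S/\kappa)$, which is strictly decreasing in $S$. Along the curve, the condition becomes $b\,R(E)=1+\sigma(E)/\kappa$. We have $1+\sigma(E)/\kappa=1+B(E)R(E)/\kappa$ with $B(E)=E/Q(E)$, and $B$ is strictly increasing since $Q$ is non-increasing. It is cleaner to eliminate $S$ directly: $S=\B(S)R(E)$ gives $S=\kappa(bR(E)-1)$. Then $\B(S)=S/R(E)=\kappa(bR(E)-1)/R(E)$, and the remaining equation is
\[
E=\kappa\,\frac{bR(E)-1}{R(E)}\,Q(E)=\kappa\Bigl(b-\frac1{R(E)}\Bigr)Q(E).
\]
The right-hand side is non-increasing in $E$ wherever it is positive: $b-1/R$ is non-increasing and $Q$ is non-increasing, and a product of nonnegative non-increasing functions is non-increasing. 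The left-hand side is strictly increasing, so there is at most one positive crossing. For existence, the right-hand side is positive at $0$ iff $bR(0)>1$, and it is bounded, so the same IVT argument as in Proposition~\ref{prop:exist_unique} applies.

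For Ricker, $\B(S)/S=be^{-S/\kappa}$ gives $S=\kappa\ln(bR(E))$. Then $\B(S)=S/R(E)$, so $E=\kappa\ln(bR(E))\,Q(E)/R(E)$. The factor $\ln(bR)/R$ is not monotone in $R$: it increases up to $R=e/b$. So the right-hand side can be non-monotone in $E$, and multiple crossings can occur. I would exhibit this with a remark or a simple example rather than a theorem.
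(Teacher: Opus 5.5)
Your proposal is correct for everything the paper actually proves, but you parametrize the nontrivial states by $E$ instead of by $S$. The paper fixes $S$ and obtains $E(S)$ implicitly from $E=\B(S)Q(E)$; this is unique and increasing in $\B(S)$ because $Q$ is non-increasing. It then studies $f(S)=\B(S)R(E(S))-S$, with $f(0)=0$, $f'(0)=bR(0)-1$, and $f\to-\infty$ for saturating $\B$. Its ``strict monotonicity'' step amounts to $f(S)/S=\frac{\B(S)}{S}R(E(S))-1$ being strictly decreasing: $\B(S)/S$ decreases, and since $\B$ is increasing, $E(S)$ increases, so $R(E(S))$ is non-increasing.

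Your route makes $B=E/Q(E)$ explicit, so the reduction needs neither an implicit function nor monotonicity of $Q$. For Beverton--Holt it gives the closed-form closure $E=\kappa\bigl(b-1/R(E)\bigr)Q(E)$. This has exactly the increasing-versus-non-increasing structure of \eqref{eq:F_equation}, so existence, uniqueness, nonexistence for $bR(0)\le1$, and the localization $R(E)>1/b$ all follow at once. The Ricker obstruction is likewise visible in the non-monotone factor $\ln(bR)/R$.

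The paper's $S$-route is more structural. The same argument gives uniqueness for any increasing $\B$ with $\B(S)/S$ strictly decreasing, and it attributes Ricker multiplicity to the non-monotonicity of $\B$. Your elimination also extends to that class, and the monotonicity it needs is again that $\B$ be increasing.

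You are right to be cautious about the general $C^1$ ``iff'', but a local transcritical argument will not settle the direction of the branch. If $S\mapsto\frac{\B(S)}{S}R(E(S))$ increases near $0$ (e.g.\ $\B(S)=bS+aS^2$ near $0$ with $a$ large, an Allee-type law), the branch is backward. Small positive states then exist for $bR(0)$ slightly below $1$, so neither your plan nor the paper's sketch can give the unqualified statement. What closes it is $\B(S)<bS$ for $S>0$ together with $R$ non-increasing. In your parametrization this takes one line: your $\Phi(E)=\frac{\B(\sigma(E))}{\sigma(E)}R(E)<bR(E)\le bR(0)$ for all $E>0$. This rules out nontrivial states whenever $bR(0)\le1$, for Beverton--Holt and Ricker alike.
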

\begin{proof}[Sketch]
With $f(S)=\B(S)R(E(S))-S$ ($E(S)$ solving the second relation, increasing in
$\B(S)$ by monotonicity of $Q$): $f(0)=0$, $f'(0)=bR(0)-1$, and $f\to-\infty$
(saturating $\B$). A positive root exists iff $f'(0)>0$; strict monotonicity gives
uniqueness.
\end{proof}
\begin{remark}
Under \eqref{eq:endo_bc} and Proposition~\ref{prop:endo}, $\R=b R(0)$ regains the sharp dichotomy
$\R>1\Rightarrow$persistence, $\R<1\Rightarrow$extinction near the trivial state.
In the exogenously forced model it is only a viability diagnostic
(Section~\ref{subsec:replacement}). We list in Table~\ref{tab:applicability} the applicability of the exogenous-recruitment model. The applicable scenarios include hatchery/stock-enhancement and ranched
“put–grow–take” fisheries, subsidized or externally managed recruitment, and short-horizon size-
selectivity studies conditional on a given recruitment input. The inapplicable scenarios include wild self-
recruiting stocks where recruitment is spawning biomass-driven, collapse/rebuilding and recruitment-overfishing questions, and depensation/Allee regime.
\end{remark}

\begin{table}[t]\centering
\caption{Applicability of the exogenous-recruitment model.}
\label{tab:applicability}
\begin{tabular}{p{0.46\linewidth}p{0.46\linewidth}}
\toprule
\textbf{Applicable (exogenous inflow appropriate)} & \textbf{Inapplicable (endogenous law needed)}\\
\midrule
Hatchery / stock-enhancement, ranched ``put--grow--take'' fisheries & Wild self-recruiting stocks (recruitment driven by spawning biomass)\\
Subsidized or externally managed recruitment input & Recruitment-overfishing, collapse and rebuilding dynamics\\
Pre-recruit dynamics resolved outside the modeled size range & Depensation / Allee regimes at low stock\\
Short-horizon size-selectivity studies conditional on given recruitment & Long-run persistence/extinction from internal demography\\
\bottomrule
\end{tabular}
\end{table}

\subsection{Intrinsic Replacement Index Under Exogenous Recruitment}
\label{subsec:replacement}

To assess the stock's intrinsic replacement capacity, we introduce a size-dependent
fertility function $m(l)\ge 0$ and define the intrinsic replacement index at
crowding level $E$ by
\begin{equation}\label{eq:RE}
R(E)\coloneqq
\int_{l_0}^{l_m}
\frac{m(l)}{g(E,l)}
\exp\!\left(
-\int_{l_0}^{l}\frac{\mu(E,\xi)}{g(E,\xi)}\,d\xi
\right)\,dl.
\end{equation}

The factor
\[
\exp\!\left(
-\int_{l_0}^{l}\frac{\mu(E,\xi)}{g(E,\xi)}\,d\xi
\right)
\]
is the probability of surviving from size $l_0$ to size $l$ under crowding level $E$,
while $dl/g(E,l)$ is the time an individual spends in the size interval $[l,l+dl]$.
Hence, the integrand in \eqref{eq:RE} represents the expected recruitment
contribution generated while an individual occupies size $l$.
Therefore, integrating over $[l_0,l_m]$ yields the expected total contribution over its lifetime in environment $E$.

Quantities of this type are standard in structured population theory, but their threshold interpretation depends on how recruitment enters the model. 
Because the present model prescribes the boundary inflow externally through
\[
g(E(t),l_0)\,x(t,l_0)=p(t),
\]
the quantity $R(E)$ should be interpreted as an intrinsic replacement index of the adult stock in environment $E$ (representing the lifetime spawning output per recruit), rather than a dimensionless stand-alone persistence or extinction threshold.

To obtain a dimensionless generation multiplier (analogous to the basic reproduction number $\mathcal{R}_0$), $R(E)$ must be coupled with the early-stage survival rate. A sharp threshold interpretation requires an endogenous recruitment boundary law closing the life cycle, for example
\[
g(E(t),l_0)\,x(t,l_0)
=
\mathcal{B}\left(\int_{l_0}^{l_m} m(l)\,x(t,l)\,dl\right),
\]
where the inflow is generated by the adult stock itself. If the early-stage survival rate from spawned output to recruit near the trivial state is denoted by $b = \mathcal{B}'(0) > 0$, the dimensionless threshold condition becomes:
\[
b R(E)>1 \;\Rightarrow\; \text{persistence},
\qquad
b R(E)<1 \;\Rightarrow\; \text{extinction}.
\]
Consequently, for the dimensional index $R(E)$, the critical threshold is exactly $1/b$. 

Despite the absence of a rigid bifurcation threshold under exogenous recruitment settings, $R(E)$ remains a physically real and biologically meaningful diagnostic quantity. It captures the actual expected lifetime spawning output of an individual under the environmental crowding $E$. Qualitatively, a larger $R(E)$ indicates a highly viable population that is robust and capable of thriving in environment $E$, whereas a smaller $R(E)$ signifies an intrinsically vulnerable stock leaning toward extinction.

Accordingly, it is natural to establish a baseline target for this diagnostic index, conventionally normalized to 
\[
R(E)\ge 1,
\]
as a biological viability target, even when mathematical persistence is externally maintained through the prescribed inflow $p(t)$. In the remainder of the paper, we impose this viability condition $R(E)\ge 1$ to ensure that the stock remains intrinsically healthy, and we use it to guide the optimization of discounted harvesting profit under a size-selective threshold policy.

\section{Optimal Control Problem and Pontryagin Maximum Principle}\label{sec:objective}

We derive first-order necessary optimality conditions for the
size-structured harvesting problem. The result is a coupled state--adjoint system
with a pointwise switching rule.

We recall the state equation on $l\in(l_0,l_m)$:
\begin{align}\label{eq:state_pmp}
\begin{aligned}
&\frac{\partial x(t,l)}{\partial t}
+\frac{\partial}{\partial l}\Big(g(E(t),l)\,x(t,l)\Big) \\[4pt]
={}&
-\mu(E(t),l)\,x(t,l)-u(t,l)\,x(t,l),
\qquad t>0,\; l\in(l_0,l_m),
\end{aligned}
\end{align}
with boundary inflow $g(E(t),l_0)\,x(t,l_0)=p(t)$ and crowding index
$E(t)=\int_{l_0}^{l_m}\chi(l)\,x(t,l)\,dl$.

Let $c(l)\ge 0$ denote the market value of a fish of length $l$. We consider the
infinite-horizon discounted revenue function
\begin{equation}\label{eq:objective_J}
J[u]
\coloneqq
\int_{0}^{\infty} e^{-rt}
\int_{l_0}^{l_m} c(l)\,u(t,l)\,x(t,l)\,dl\,dt,
\qquad r>0,
\end{equation}
where $r>0$ is the economic discount rate. The term $u(t,l)\,x(t,l)$ represents
the harvest flow from size class $l$ at time $t$.

The harvesting decision is constrained by physical capacity, regulatory limits,
and enforcement:
\begin{equation}\label{eq:control_bounds}
0\le u(t,l)\le u_{\max},
\qquad (t,l)\in[0,\infty)\times[l_0,l_m],
\end{equation}
where $u_{\max}>0$ is the maximum admissible harvest intensity.
Therefore, we define the admissible control set by
\begin{equation}\label{eq:admissible_set}
\mathcal{U}_{\mathrm{ad}}
\coloneqq
\left\{
u:[0,\infty)\times[l_0,l_m]\to[0,u_{\max}]
\;:\;
u \text{ is measurable}
\right\}.
\end{equation}
For every $u\in\mathcal{U}_{\mathrm{ad}}$, we assume that the state equation
\eqref{eq:fish_model} admits a unique nonnegative solution
\[
x^u \in C\big([0,\infty);L^1([l_0,l_m])\big),
\]
under the standing assumptions of Assumption~\ref{ass:standing}. Thus the control
problem consists of maximizing $J[u]$ over all admissible controls
$u\in\mathcal{U}_{\mathrm{ad}}$.

To facilitate a quick understanding of the maximum principle and maintain the coherence of the text, we have reduced the mathematical rigor in the main body (Propositions~\ref{prop:necessary} and~\ref{prop:weak}), retaining only the formal mathematical expressions and corresponding proofs. Readers with a strong mathematical requirement can refer to the rigorous proofs in the Appendix~\ref{appsec:functional}.

\subsection{Lagrangian and variational framework}\label{subsec:lagrangian}

The costate $\lambda(t,l)$ in the adjoint equation satisfies a formal backward transport equation
derived in the following proposition.

\begin{proposition}[Formal first-order necessary conditions]
\label{prop:necessary}
Assume that the coefficients, controls, and state variables satisfy
Assumption~\ref{ass:standing} and are sufficiently smooth for integration by parts to
be justified formally. Let $(\hat u, \hat x, \hat E)$ be an optimal solution to the problem
\eqref{eq:objective_J} and \eqref{eq:state_pmp} subject to
$g(\hat E(t),l_0)\,\hat x(t,l_0)=p(t)$. 
Then, formally, there exists an adjoint variable
$\lambda:[0,\infty)\times[l_0,l_m]\to\mathbb{R}$ satisfying:
\begin{align}\label{eq:adjoint}
\partial_t \lambda(t,l) &+ g(\hat E(t),l)\,\partial_l \lambda(t,l) \notag\\
&= \bigl(r + \mu(\hat E(t),l) + \hat u(t,l)\bigr)\lambda(t,l) - c(l)\,\hat u(t,l)
\notag \\
&\quad + \chi(l)\!\int_{l_0}^{l_m} \hat x(t,s) \Big(
   \lambda(t,s)\,\partial_E\mu(\hat E(t),s)
   - \partial_s\lambda(t,s)\,\partial_Eg(\hat E(t),s) \Big)\,ds.
\end{align}

The terminal-size and transversality conditions are:
\begin{align}
\lambda(t,l_m) &= 0 \qquad \text{for } t \ge 0,
   \label{eq:adjoint_boundary} \\
\lim_{T\to\infty} \int_{l_0}^{l_m} e^{-rT}\lambda(T,l)\,\delta x(T,l)\,dl &= 0
   \qquad \text{for every admissible variation } \delta x.
   \label{eq:transversality}
\end{align}

Define the switching function
\begin{equation}\label{eq:switching}
\Phi(t,l) \coloneqq \hat x(t,l)\bigl( c(l) - \lambda(t,l) \bigr).
\end{equation}
Assuming $\hat x(t,l)>0$ for a.e.\ $(t,l)$, the optimal harvesting profile satisfies the
bang--bang rule:
\begin{equation}\label{eq:opt_control}
\hat u(t,l)=
\begin{cases}
0, & c(l)<\lambda(t,l),\\[4pt]
u_{\max}, & c(l)>\lambda(t,l),\\[4pt]
\text{any value in }[0,u_{\max}], & c(l)=\lambda(t,l).
\end{cases}
\end{equation}
\end{proposition}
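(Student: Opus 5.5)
The plan is the standard formal Lagrangian argument. I introduce the multiplier $\lambda(t,l)$ for the state equation \eqref{eq:state_pmp}, and treat the nonlocal relation $E(t)=\int\chi x\,dl$ by substitution rather than with a separate multiplier. I then form
\[
\mathcal{L}[u,x,\lambda]=\int_0^\infty\!\!\int_{l_0}^{l_m} e^{-rt}\Big(c\,u\,x-\lambda\big(\partial_t x+\partial_l(g(E,l)x)+(\mu(E,l)+u)x\big)\Big)\,dl\,dt .
\]
On solutions of the state equation, $\mathcal{L}=J[u]$. I perturb $u=\hat u+\varepsilon\,\delta u$ with $\hat u+\varepsilon\delta u\in\Uad$. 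This gives state variations $\delta x$ and $\delta E=\int\chi\,\delta x\,dl$. The initial datum is fixed, so $\delta x(0,\cdot)=0$. Differentiating the boundary condition gives $\partial_E g(\hat E,l_0)\,\delta E\,\hat x(t,l_0)+g(\hat E,l_0)\,\delta x(t,l_0)=0$. I then compute the first variation $\delta\mathcal{L}$ and choose $\lambda$ so that every term containing $\delta x$ vanishes. The adjoint equation, boundary condition and transversality condition are read off from this requirement.

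The key steps, in order, are as follows.

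First, I integrate by parts in $t$ in the term $-\int\!\int e^{-rt}\lambda\,\partial_t\delta x$. This gives $\int\!\int e^{-rt}(\partial_t\lambda-r\lambda)\delta x$, plus a boundary term at $t=T\to\infty$; that boundary term is exactly the expression in \eqref{eq:transversality}, which I impose.

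Second, I integrate by parts in $l$ in $-\int\lambda\,\partial_l(g\,\delta x+\partial_Eg\,\delta E\,\hat x)$. This produces $\int g\,\partial_l\lambda\,\delta x$ and $\int\partial_l\lambda\,\partial_Eg\,\hat x\,\delta E$. It also produces the boundary terms $-[\lambda(g\delta x+\partial_E g\,\delta E\,\hat x)]_{l_0}^{l_m}$. At $l_0$ this flux vanishes by the linearized inflow condition. At $l_m$ the outflow flux is free, so I choose $\lambda(t,l_m)=0$, giving \eqref{eq:adjoint_boundary}.

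Third, I handle the nonlocal terms $-\lambda\,\partial_E\mu\,\hat x\,\delta E$ and $+\partial_s\lambda\,\partial_Eg\,\hat x\,\delta E$. Substituting $\delta E=\int\chi(l)\delta x(t,l)\,dl$ and applying Fubini in $(s,l)$ turns them into $\delta x(t,l)$ multiplied by $\chi(l)\int\hat x(t,s)\big(-\lambda\,\partial_E\mu+\partial_s\lambda\,\partial_E g\big)(t,s)\,ds$.

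Fourth, collecting everything, the coefficient of $e^{-rt}\delta x$ is
\[
\partial_t\lambda-r\lambda+g\,\partial_l\lambda+c\hat u-(\mu+\hat u)\lambda-\chi(l)\!\int\hat x\,(\lambda\,\partial_E\mu-\partial_s\lambda\,\partial_Eg)\,ds .
\]
Setting this coefficient to zero is precisely \eqref{eq:adjoint}. This equation is a backward linear transport equation with a nonlocal rank-one perturbation, posed with data at $l=l_m$, and I treat its solvability formally, in line with the proposition.

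Once the $\delta x$ terms are removed, what remains is
\[
\delta J=\int_0^\infty\!\!\int e^{-rt}\,\hat x\,(c-\lambda)\,\delta u\,dl\,dt=\int\!\!\int e^{-rt}\,\Phi\,\delta u .
\]
Optimality of $\hat u$ forces $\delta J\le0$ for every admissible direction. By a standard needle/Lebesgue-point argument, this gives $\Phi\,\delta u\le 0$ a.e. Concretely, where $\Phi>0$ only $\delta u\le0$ can be admissible, which forces $\hat u=u_{\max}$. Where $\Phi<0$ the same reasoning forces $\hat u=0$. Since $\hat x>0$ a.e., the sign of $\Phi$ equals the sign of $c-\lambda$, and this yields \eqref{eq:opt_control}.

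I expect the main obstacle to be the bookkeeping of the nonlocal terms and of the boundary flux at $l_0$. Because $g$ depends on $E$, the linearized boundary condition involves $\delta E$ itself, so one must check that the entire flux $g\,\delta x+\partial_E g\,\hat x\,\delta E$ (and not $\delta x$ alone) vanishes at $l_0$. Only then does no extra nonlocal boundary term leak into the adjoint equation. Within the formal framework, this identification of the full linearized flux at $l_0$ is the step I would verify first and most carefully.
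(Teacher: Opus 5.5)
Your proposal is correct and follows essentially the same route as the paper's proof. Both arguments use the formal Lagrangian with multiplier $\lambda$, integrate by parts in $t$ (using $\delta x(0,\cdot)=0$ and the transversality condition) and in $l$ (using $\lambda(t,l_m)=0$ and the vanishing of the full linearized flux $g\,\delta x+\hat x\,\partial_E g\,\delta E$ at $l_0$), substitute $\delta E=\int\chi\,\delta x\,dl$ to obtain the nonlocal term $\chi(l)C(t)$, and derive the bang--bang rule from the remaining control variation $\hat x(c-\lambda)\,\delta u$. Your variational-inequality treatment of the sign rule is slightly more careful than the main-text proof and matches the appendix's Theorem~\ref{thm:pmp}.
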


\begin{proof}
Define the Lagrangian by adjoining \eqref{eq:state_pmp} to the objective
\eqref{eq:objective_J} via the multiplier $\lambda(t,l)$:
\begin{align*}
L(x, u, \lambda) 
&= \int_0^\infty e^{-rt} \int_{l_0}^{l_m} c(l)u(t,l)x(t,l)\,dl\,dt \\
&\quad - \int_0^\infty e^{-rt} \int_{l_0}^{l_m} \lambda(t,l) \Big[ \partial_t x(t,l) + \partial_l\bigl(g(E(t),l)x(t,l)\bigr) \\
&\hspace{4.5cm} + \bigl(\mu(E(t),l) + u(t,l)\bigr)x(t,l) \Big]\,dl\,dt.
\end{align*}
Consider admissible perturbations $x \to x + \delta x$ and $u \to u + \delta u$, which induce a variation in the nonlocal environment variable $E \to E + \delta E$ where
\[
\delta E(t) = \int_{l_0}^{l_m} \chi(l)\delta x(t,l)\,dl.
\]
The variation of the Lagrangian with respect to the state, $\delta_x L$, must vanish at the optimum:
\begin{align*}
\delta_x L &= \int_0^\infty e^{-rt} \int_{l_0}^{l_m} \delta x \big(c(l)u\big)\,dl\,dt \\
&\quad - \int_0^\infty e^{-rt} \int_{l_0}^{l_m} \lambda \Big[ \partial_t \delta x + \partial_l(g \delta x + x \partial_Eg \delta E) + (\mu + u)\delta x + x \partial_E\mu \delta E \Big]\,dl\,dt = 0.
\end{align*}
We evaluate the integral terms utilizing integration by parts. For the time derivative, assuming $\delta x(0,l) = 0$ and invoking the transversality condition \eqref{eq:transversality} as $t\to\infty$, we get
\[
-\int_0^\infty e^{-rt} \lambda(t,l) \partial_t \delta x\,dt = \int_0^\infty e^{-rt} \bigl(\partial_t \lambda - r\lambda\bigr)\delta x\,dt.
\]
For the spatial derivatives, integration by parts yields:
\begin{align*}
-\int_{l_0}^{l_m} \lambda \partial_l\big(g \delta x + x \partial_Eg \delta E\big)\,dl 
&= - \Big[ \lambda \big(g \delta x + x \partial_Eg \delta E\big) \Big]_{l_0}^{l_m} 
+ \int_{l_0}^{l_m} \partial_l \lambda \big(g \delta x + x \partial_Eg \delta E\big)\,dl.
\end{align*}
Under the terminal-size condition $\lambda(t,l_m) = 0$, the boundary term at $l_m$ vanishes. 
At the recruitment boundary $l_0$, the admissible variations must preserve the inflow flux condition
\[
g(E(t),l_0)x(t,l_0)=p(t),
\]
with $p(t)$ exogenous. Therefore,
\[
\delta\!\big(g(E(t),l_0)x(t,l_0)\big)=0,
\]
that is,
\[
g(E(t),l_0)\,\delta x(t,l_0)+x(t,l_0)\,\partial_E g(E(t),l_0)\,\delta E(t)=0.
\]
Hence the boundary contribution at $l_0$ vanishes:
\[
-\Big[\lambda(t,l)\big(g\,\delta x+x\,\partial_E g\,\delta E\big)\Big]_{l=l_0}=0.
\]

Isolating all terms proportional to $\delta E(t)$ from the spatial and mortality variations, we define the environment coupling factor $C(t)$:
\[
C(t) = \int_{l_0}^{l_m} x(t,s)\Big[ \partial_s\lambda(t,s)\partial_Eg(E(t),s) - \lambda(t,s)\partial_E\mu(E(t),s) \Big]\,ds.
\]
Substituting $\delta E(t) = \int_{l_0}^{l_m} \chi(l)\delta x(t,l)\,dl$ back into the variational equation, this contributes $\chi(l)C(t)$ inside the spatial integral with respect to $\delta x(t,l)$.

Collecting all coefficients for $\delta x(t,l)$ inside the double integral gives:
\begin{align*}
\delta_x L &= \int_0^\infty e^{-rt} \int_{l_0}^{l_m} \delta x(t,l) \Bigg[ c(l)u + \partial_t \lambda - r\lambda + g\partial_l\lambda - \lambda(\mu + u) + \chi(l)C(t) \Bigg]\,dl\,dt = 0.
\end{align*}
Since this must hold for any arbitrary variation $\delta x$, the expression in the square brackets must vanish a.e. Rearranging this yields the adjoint equation \eqref{eq:adjoint}.

Finally, taking the variation with respect to the control $u$, we obtain:
\[
\delta_u L = \int_0^\infty e^{-rt} \int_{l_0}^{l_m} x(t,l)\big( c(l) - \lambda(t,l) \big)\delta u(t,l)\,dl\,dt.
\]
This defines the switching function $\frac{\partial L}{\partial u}$, dictating the optimal bang-bang policy described in \eqref{eq:opt_control}.
\end{proof}

The adjoint $\lambda(t,l)$ represents the shadow value of an additional marginal fish
of size $l$ at time $t$. In \eqref{eq:adjoint}, the term $c(l)\,u(t,l)$ is a value
source from immediate harvest revenue, while $(r+\mu+u)\lambda$ captures the erosion of
continuation value due to discounting and total mortality.

\subsection{Weak-Coupling Approximation and Reduced Adjoint System}
\label{subsec:weak}

Having established the formal adjoint equation \eqref{eq:adjoint}, we consider a weak-coupling approximation that leads to the adjoint equation form appearing in the common literature.

\begin{proposition}[Formal weak-coupling approximation of the adjoint equation]
\label{prop:weak}
Assume that, in the adjoint equation \eqref{eq:adjoint}, the nonlocal coupling term
arising from the dependence of $g(E,l)$ and $\mu(E,l)$ on the crowding variable $E$
is neglected as a formal weak-coupling approximation. Equivalently, the forward state
dynamics retain their full density dependence through $g(E,l)$ and $\mu(E,l)$, while
the adjoint is approximated by dropping the terms involving $\partial_E g$ and
$\partial_E\mu$.

Then the adjoint equation is formally approximated by
\begin{equation}\label{eq:adjoint_reduced}
-\partial_t\lambda(t,l)-g(E(t),l)\,\partial_l\lambda(t,l)
=
c(l)\,u(t,l)-\bigl(r+\mu(E(t),l)+u(t,l)\bigr)\lambda(t,l).
\end{equation}
\end{proposition}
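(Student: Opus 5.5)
The plan is to start from the full formal adjoint equation \eqref{eq:adjoint} of Proposition~\ref{prop:necessary} and isolate the nonlocal term. Write the right-hand side of \eqref{eq:adjoint} as a local part plus a coupling part,
\[
\bigl(r+\mu(\hat E(t),l)+\hat u(t,l)\bigr)\lambda(t,l)-c(l)\hat u(t,l)\;+\;\chi(l)\,\mathcal{N}(t),
\]
where
\[
\mathcal{N}(t)\coloneqq\int_{l_0}^{l_m}\hat x(t,s)\Bigl(\lambda(t,s)\,\partial_E\mu(\hat E(t),s)-\partial_s\lambda(t,s)\,\partial_E g(\hat E(t),s)\Bigr)\,ds .
\]
Up to sign, $\mathcal{N}(t)$ is exactly the environment coupling factor $C(t)$ from the proof of Proposition~\ref{prop:necessary}, namely $\mathcal{N}=-C$. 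The first step is to note that $\mathcal{N}$ is the only place where $\partial_E g$ and $\partial_E\mu$ enter the adjoint equation. The local part depends on $E$ only through the values $g(\hat E,l)$ and $\mu(\hat E,l)$, and not through their derivatives.

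The second step applies the weak-coupling hypothesis. Setting $\partial_E g\equiv 0$ and $\partial_E\mu\equiv 0$ inside $\mathcal{N}$ only, while keeping $\hat E(t)$ in the coefficients $g(\hat E(t),l)$ and $\mu(\hat E(t),l)$, gives $\mathcal{N}\equiv 0$. To make the approximation quantitative, I would bound the dropped term by
\[
|\chi(l)\mathcal{N}(t)|\le \|\chi\|_\infty\,\|\hat x(t)\|_{L^1}\Bigl(\|\lambda(t)\|_\infty\|\partial_E\mu\|_\infty+\|\partial_l\lambda(t)\|_\infty\|\partial_E g\|_\infty\Bigr).
\]
This shows the neglected term is $O(\varepsilon)$ when $\partial_E g$ and $\partial_E\mu$ are $O(\varepsilon)$, provided $\lambda$ and $\partial_l\lambda$ stay bounded. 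The third step is a rearrangement. Multiplying the truncated equation
\[
\partial_t\lambda+g\,\partial_l\lambda=(r+\mu+\hat u)\lambda-c\hat u
\]
by $-1$ gives \eqref{eq:adjoint_reduced}, with $(E,u)$ standing for the optimal $(\hat E,\hat u)$. The terminal-size condition \eqref{eq:adjoint_boundary} and the transversality condition \eqref{eq:transversality} are unaffected because they do not involve $\mathcal{N}$.

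The main obstacle is the a priori bound on $\partial_l\lambda$ needed to control the dropped term. Since the statement is explicitly formal, I would first handle this along characteristics $dl/dt=g(\hat E(t),l)$. On these the reduced equation is a linear ODE, so $\lambda$ has an explicit representation as an integral of $c\hat u$ weighted by survival and discount factors. This representation yields boundedness of $\lambda$ directly. Bounding $\partial_l\lambda$ would additionally need $\hat u$ to have bounded variation in $l$; a bang--bang policy with finitely many switches would suffice. If that fails, I would state the approximation purely formally, as the statement permits.
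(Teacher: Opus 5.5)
Your proposal is correct and takes the paper's approach: the paper gives no separate argument for this formal statement, and obtains \eqref{eq:adjoint_reduced} simply by deleting $\chi(l)\int \hat x\,(\lambda\,\partial_E\mu-\partial_s\lambda\,\partial_E g)\,ds$ from \eqref{eq:adjoint} and multiplying by $-1$, exactly as you do. Your residual bound is the constant $A=\|\chi\|_\infty M_1(\kappa_g\Lambda_1+\kappa_\mu\Lambda_0)$ of Proposition~\ref{prop:wcbound}, where the paper adds integration along backward characteristics and Gr\"onwall to bound $\lambda-\tilde\lambda$, with $\Lambda_1=\|\partial_l\lambda\|_\infty$ simply assumed finite rather than derived.
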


Since $\hat x(t,l)>0$ for a.e.\ $(t,l)$, the switching function
\[
\Phi(t,l)=\hat x(t,l)\bigl(c(l)-\lambda(t,l)\bigr)
\]
defined in \eqref{eq:switching} has the same sign as $c(l)-\lambda(t,l)$. 
It is
therefore convenient to introduce the reduced switching function
\begin{equation}\label{eq:switchfunc_def}
S(t,l)\coloneqq c(l)-\lambda(t,l).
\end{equation}
Under the reduced adjoint equation \eqref{eq:adjoint_reduced}, the optimal control
rule \eqref{eq:opt_control} can be written compactly as
\begin{equation}\label{eq:u_star_sign}
\hat u(t,l)=
\begin{cases}
0, & S(t,l)<0,\\[4pt]
u_{\max}, & S(t,l)>0.
\end{cases}
\end{equation}
At points where $S(t,l)=0$, any value in $[0,u_{\max}]$ is formally admissible.

The weak-coupling approximation is useful in computation because the reduced adjoint
equation \eqref{eq:adjoint_reduced} removes the nonlocal integral term from the
backward equation. The approximation is expected to be reasonable when the
density sensitivities $\partial_E g$ and $\partial_E\mu$ are small relative to the
leading transport, mortality, and discount terms. However, it remains a formal
simplification rather than a rigorously justified asymptotic reduction unless it is
supplemented by a quantitative error estimate, which is beyond the scope of the
present paper.

We now derive explicit error bounds for the weak-coupling approximation using concrete function forms for $g$ and $\mu$ in Section~\ref{sec:numerical}.
\begin{proposition}[Explicit first-order bound]\label{prop:wcbound}
With $g(E,l;\varepsilon)=K(L_\infty-l)/(1+\varepsilon\alpha E)$,
$\mu(E,l;\varepsilon)=\mu_0+\varepsilon\mu_1E$ (so $\sup|\partial_E g|\le\kappa_g\varepsilon$,
$\sup|\partial_E\mu|\le\kappa_\mu\varepsilon$), let $\lambda,\tilde\lambda$ solve the
full \eqref{eq:adjoint} and reduced \eqref{eq:adjoint_reduced} adjoints along a
common trajectory. Set $\Lambda_0=\|\lambda\|_\infty$,
$\Lambda_1=\|\partial_l\lambda\|_\infty$, $M_1=\sup_t\|x\|_{L^1}$,
$\beta=r+\bar\mu+u_{\max}$, $A=\|\chi\|_\infty M_1(\kappa_g\Lambda_1+\kappa_\mu\Lambda_0)$.
Then, with $\tau$ the size-transit time,
\[
\|\lambda-\tilde\lambda\|_{L^\infty(\Qt)}\le\frac{A}{\beta}(e^{\beta\tau}-1)\varepsilon=:C_T\varepsilon,
\qquad \|S-\tilde S\|_\infty=\|\lambda-\tilde\lambda\|_\infty .
\]
\end{proposition}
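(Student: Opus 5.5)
The plan is to subtract the reduced adjoint \eqref{eq:adjoint_reduced} from the full adjoint \eqref{eq:adjoint} along the common trajectory $(x,u,E)$. The difference $w\coloneqq\lambda-\tilde\lambda$ then solves a linear backward transport equation whose forcing is exactly the nonlocal coupling term. Concretely,
\[
\partial_t w + g(E(t),l)\,\partial_l w-\bigl(r+\mu(E(t),l)+u(t,l)\bigr)w=\chi(l)\,\mathcal{N}(t),
\]
\[
\mathcal{N}(t)\coloneqq\int_{l_0}^{l_m}x(t,s)\Bigl(\lambda\,\partial_E\mu-\partial_s\lambda\,\partial_E g\Bigr)(t,s)\,ds ,
\]
with $w(t,l_m)=0$, since both adjoints satisfy \eqref{eq:adjoint_boundary}. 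The source term $c\,u$ cancels because the control is common. The key structural point is that the forcing involves only the full $\lambda$, not $w$, so no Gronwall loop on $w$ is needed.

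The first step is to bound the forcing uniformly. Using $\sup|\partial_E\mu|\le\kappa_\mu\varepsilon$, $\sup|\partial_E g|\le\kappa_g\varepsilon$, $\|\lambda\|_\infty=\Lambda_0$, $\|\partial_l\lambda\|_\infty=\Lambda_1$ and $\|x(t)\|_{L^1}\le M_1$, I get
\[
|\chi(l)\mathcal{N}(t)|\le\|\chi\|_\infty M_1(\kappa_\mu\Lambda_0+\kappa_g\Lambda_1)\varepsilon=A\varepsilon .
\]
These constants are as defined in the statement; the explicit forms $g=K(L_\infty-l)/(1+\varepsilon\alpha E)$ and $\mu=\mu_0+\varepsilon\mu_1E$ only serve to produce $\kappa_g$ and $\kappa_\mu$, e.g.\ $|\partial_E g|\le K(L_\infty-l_0)\alpha\varepsilon$.

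The second step integrates along characteristics. Take $dl/ds=g(E(s),l(s))$, starting from a point $(t,l)$ and running forward in $s$ until the curve exits at $l=l_m$ after a size-transit time $\tau(t,l)\le\tau$. Along the curve, $\frac{d}{ds}w=(r+\mu+u)w+\chi\mathcal{N}$. Solving this backward from the zero terminal value gives
\[
w(t,l)=-\int_0^{\tau(t,l)}e^{-\int_0^\sigma(r+\mu+u)}\,\chi\mathcal{N}\,d\sigma .
\]
The exponential factor is at most $1$, so this yields the bound $A\varepsilon\tau$. To match the stated constant, I would instead use the cruder estimate that controls $|w|$ through $\frac{d}{ds}|w|\le\beta|w|+A\varepsilon$, with $\beta=r+\bar\mu+u_{\max}$ bounding the total-rate coefficient. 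Gronwall's inequality from the zero boundary value then gives $|w|\le\frac{A}{\beta}(e^{\beta\tau}-1)\varepsilon$. This is weaker than $A\tau\varepsilon$, since $(e^{\beta\tau}-1)/\beta\ge\tau$, so the stated bound follows either way.

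The final claim $\|S-\tilde S\|_\infty=\|\lambda-\tilde\lambda\|_\infty$ is immediate from \eqref{eq:switchfunc_def}, since $c$ is common to both switching functions. I expect the main obstacle to be justifying the characteristic representation. The issue is that $g$ vanishes at $L_\infty$, so the transit time to $l_m$ is finite only if $l_m<L_\infty$; I would assume this, which gives $\tau\le\int_{l_0}^{l_m}dl/g_{\min}$ uniformly in $t$. A second requirement is that $\Lambda_1<\infty$, i.e.\ that the full adjoint is Lipschitz in $l$; I would take this as part of the formal smoothness hypothesis, as in Proposition~\ref{prop:necessary}.
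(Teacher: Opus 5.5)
Your proposal is correct and follows the paper's argument. You subtract the adjoints to get a linear backward transport equation for $\lambda-\tilde\lambda$ whose forcing is only the nonlocal term in the full $\lambda$, bound that forcing by $A\varepsilon$, and integrate along backward characteristics (span at most $\tau$) with Gr\"onwall to obtain $\frac{A}{\beta}(e^{\beta\tau}-1)\varepsilon$. Your side remarks are also right: since $r+\mu+u\ge 0$, the exact Duhamel formula already gives the sharper bound $A\tau\varepsilon$, so Gr\"onwall is needed only to reproduce the stated constant. On $Q_T$ a characteristic may also exit through $t=T$ rather than $l=l_m$, but the difference vanishes there as well, so nothing changes.
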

\begin{proof}
$e=\lambda-\tilde\lambda$ obeys, with $e=0$ at $l_m$ and $t=T$,
$-\partial_t e-g\partial_l e=-(r+\mu+u)e+\chi\,\mathcal C_\varepsilon[\lambda]$,
$|\mathcal C_\varepsilon[\lambda]|\le M_1(\kappa_g\Lambda_1+\kappa_\mu\Lambda_0)\varepsilon$.
Integration along backward characteristics (span $\le\tau$) and Gr\"onwall give
$|e|\le(A\varepsilon/\beta)(e^{\beta\tau}-1)$.
\end{proof}
\begin{remark}[Effective constant and valid range]\label{rem:validrange}
$C_T$ is worst-case; cancellation in $\mathcal C_\varepsilon$ makes the effective
constant $\Ord(1)$, matching the observed ratio $\approx1.5$
(Table~\ref{tab:weak_coupling_error}). The reduction is first-order accurate when
$\varepsilon\alpha E\ll1$ (growth) and $\varepsilon\mu_1E\ll\mu_0+r$ (mortality).
For the calibration $\alpha E\approx0.5$ and $\mu_1E/(\mu_0+r)\approx0.04$: the
mortality coupling is weak, the growth coupling moderate, and the error is
dominated by $\partial_E g$.
\end{remark}

\subsection{Threshold Structure of the Optimal Harvest Policy}
\label{subsec:threshold}

\begin{assumption}[Single-crossing assumptions for the switching function]
\label{ass:switching}
Fix $t\ge 0$. Assume that:
\begin{enumerate}[label=(\roman*)]
\item $c(l)$ is continuous and strictly increasing on $[l_0,l_m]$;
\item $\lambda(t,l)$ is continuous on $[l_0,l_m]$;
\item\label{ass:lsc} Fix $t$. $c\in C^1$ with $c'>0$; $\lambda(t,\cdot)\in C^1$; at every zero $\ell$ of
$S(t,\cdot)$, $\ \partial_l S(t,\ell)=c'(\ell)-\partial_l\lambda(t,\ell)>0$; and
$S(t,\cdot)\not\equiv0$ on any subinterval.
\end{enumerate}
\end{assumption}

Under Assumption~\ref{ass:switching}, the reduced switching function
\[
S(t,l)=c(l)-\lambda(t,l)
\]
is continuous and strictly increasing in $l$ for each fixed $t$. Condition
(i) reflects the natural assumption that larger fish command a higher market
value, while condition (iii) imposes a single-crossing structure by requiring
the switching function $S(t,l)$ to be increasing in size.

\begin{lemma}[Unique threshold]\label{lem:cross}
Under Assumption~\ref{ass:switching}, $S(t,\cdot)$ has at most one zero. If
$S(t,l_0)<0<S(t,l_m)$, a unique threshold $E(t)\in(l_0,l_m)$ exists with $S<0$
on $(l_0,E(t))$, $S>0$ on $(E(t),l_m)$; hence $\hat u=0$ for $l<E(t)$, $\hat u=u_{\max}$
for $l>E(t)$, and $t\mapsto \hat E(t)$ is measurable.
\end{lemma}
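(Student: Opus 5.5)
The argument has three parts: a transversality argument for uniqueness of the zero, the intermediate value theorem for existence, and a separate treatment of measurability in $t$. Note that the threshold is written $E(t)$ in the statement and $\hat E(t)$ at the end; it is a size, not the crowding index, and I will call it $\hat l(t)$ in my head while keeping the paper's notation in the write-up.

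\emph{At most one zero.} Fix $t$. By Assumption~\ref{ass:switching}\ref{ass:lsc}, $S(t,\cdot)=c-\lambda(t,\cdot)$ is $C^1$ on $[l_0,l_m]$, and at every zero $\ell$ we have $\partial_l S(t,\ell)>0$. Suppose $\ell_1<\ell_2$ are two consecutive zeros; zeros are isolated by the positive derivative, and on a compact interval there are finitely many. Since $\partial_l S(t,\ell_1)>0$, $S>0$ just to the right of $\ell_1$. Since $\partial_l S(t,\ell_2)>0$, $S<0$ just to the left of $\ell_2$. The intermediate value theorem then gives a zero strictly between them, contradicting consecutiveness. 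To avoid the finiteness detour, I can instead let $\ell_1$ be any zero and set $\ell_2=\inf\{\ell>\ell_1:S(t,\ell)=0\}$. This is well defined, $\ell_2>\ell_1$ by isolation, and it leads to the same contradiction. The same reasoning shows that $S>0$ on $(\ell,l_m]$ after the unique zero $\ell$, and $S<0$ on $[l_0,\ell)$.

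\emph{Existence and sign pattern.} If $S(t,l_0)<0<S(t,l_m)$, continuity and the intermediate value theorem give a zero $E(t)\in(l_0,l_m)$, which is unique by the previous step. On $(l_0,E(t))$, $S$ has no zero and $S(t,l_0)<0$, so $S<0$ there by connectedness. Similarly $S>0$ on $(E(t),l_m)$. The rule \eqref{eq:u_star_sign} then yields $\hat u=0$ for $l<E(t)$ and $\hat u=u_{\max}$ for $l>E(t)$. The single point $l=E(t)$ is a null set, so the value of $\hat u$ there is irrelevant.

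\emph{Measurability.} This is the main obstacle, since the assumption is pointwise in $t$ and the paper gives no joint regularity. I will assume, as is implicit in the formal framework, that $\lambda$ and hence $S$ is jointly measurable in $(t,l)$ and continuous in $l$, i.e.\ Carathéodory. On the measurable set $T_0=\{t: S(t,l_0)<0<S(t,l_m)\}$, for each $a\in(l_0,l_m)$ the sign pattern gives
\[
\{t\in T_0:\hat E(t)>a\}=\{t\in T_0:S(t,a)<0\},
\]
and the right-hand side is measurable because $S(\cdot,a)$ is measurable. Hence $\hat E$ is measurable on $T_0$. Outside $T_0$ I extend $\hat E$ by $l_0$ or $l_m$ according to the sign of $S$, and these sets are measurable as well. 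Alternatively, one can write $\hat E(t)=\sup\{q\in\mathbb{Q}\cap(l_0,l_m): S(t,q)<0\}$, a countable supremum of measurable functions, which makes the Carathéodory hypothesis visibly the only ingredient needed.
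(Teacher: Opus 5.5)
Your proposal is correct and follows essentially the same route as the paper's sketch. Each zero of $S(t,\cdot)$ is a strict upward crossing, so a second zero would require an intervening downward crossing; existence comes from the intermediate value theorem; and the policy is read off from the pointwise switching rule. Your measurability step is a more careful version of the paper's ``measurability follows from continuity in $l$'': you correctly note that it also needs $S$ to be measurable in $t$ (i.e.\ Carath\'eodory), which the paper assumes without saying so.
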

\begin{proof}
Every zero is a strict upward crossing, so two zeros would require an intervening
downward crossing, which is impossible. Existence is the intermediate value theorem; the
policy is Theorem~\ref{thm:pmp}; measurability follows from continuity in $l$.
\end{proof}

The threshold policy (Theorem~\ref{thm:pmp}) has a direct operational
interpretation: fish below the threshold size $E(t)$ are protected, while fish
above $\hat E(t)$ are harvested at the maximum admissible intensity.
In practice, this corresponds to a minimum-size harvesting rule that can be
implemented through minimum landing size regulations, mesh-size restrictions and
gear selectivity designed to reduce the capture of fish below $ E(t)$.

If the optimal regime converges to a steady state, so that both the state and
adjoint become time-independent, then $E(t)$ becomes a stationary threshold value $E^*$ and the optimal policy
reduces to the stationary threshold rule
\begin{equation}\label{eq:threshold_policy_stationary}
\hat u(l)=
\begin{cases}
0, & l<E^*,\\[4pt]
u_{\max}, & l>E^*.
\end{cases}
\end{equation}

Under long-run convergence, the stationary threshold $E^*$ is characterized by
the steady-state counterpart of the adjoint equation together with the closure
relation \eqref{eq:F_equation}. This connects the optimal-control analysis in this section to the steady-state framework of Section~\ref{sec:steady_state}, and provides a consistent description of the
long-run optimal harvesting regime.

\section{Numerical Simulation and Case Study}\label{sec:numerical}

This section illustrates the analytical results of the preceding sections
through a numerical case study calibrated to a managed gadoid fishery. The
numerical setup is chosen to be broadly consistent with Atlantic cod
(\textit{Gadus morhua}), a stock for which size-based management measures such as
minimum fish sizes and annual catch limits are standard practice \cite{tallack_regional_2009,noaa_fisheries_atlantic_2026,liang2026separation}.

\subsection{Parameter specification and function forms}\label{subsec:params}

For the numerical case study, we take the entry size to be $l_0=20$\,cm and the upper boundary size to be $l_m=130$\,cm, so that $l\in[20,130]$\,cm. To preserve the standing assumption $g(E,l)\ge g_{\min}>0$ on the modeled interval, the asymptotic von~Bertalanffy length is chosen strictly above the computational upper bound,
\[
L_\infty = 135.3\ \text{cm} > l_m=130\ \text{cm},
\]
which avoids the degeneracy that would occur if $l_m=L_\infty$.

Individual growth follows the density-dependent von~Bertalanffy form
\begin{equation}\label{eq:gEL_num}
g(E,l)=\frac{K\,(L_\infty-l)}{1+\alpha E},
\end{equation}
with
\[
L_\infty=135.3\ \text{cm},\qquad
K=0.17\ \mathrm{yr}^{-1},\qquad
\alpha=5\times 10^{-6}\ \mathrm{individual}^{-1}.
\]
The values $L_\infty=135.3$ cm and $K=0.17\,\mathrm{yr}^{-1}$ are consistent with a regional Atlantic cod von Bertalanffy calibration reported by Tallack et al. \cite{tallack_regional_2009,noaa_fisheries_atlantic_2026,liu2026computational}.

Natural mortality is modeled as a linearly density-dependent rate,
\begin{equation}\label{eq:muEL_num}
\mu(E,l)=\mu_0+\mu_1 E,
\end{equation}
with
\[
\mu_0=0.20\ \mathrm{yr}^{-1},\qquad
\mu_1=1\times 10^{-7}\ \mathrm{yr}^{-1}\,\mathrm{individual}^{-1}.
\]
The baseline value $\mu_0=0.20\,\mathrm{yr}^{-1}$ is adopted as a plausible adult-cod benchmark, consistent with DFO guidance noting that instantaneous natural mortality $M\approx 0.2$ is considered normal for adult cod in the 4T--4Vn assessment context \cite{government_of_canada_rebuilding_2025,yu2026pattern}. Moreover, $\partial_E\mu=\mu_1\ge 0$, so the monotonicity requirement of Assumption~\ref{ass:standing} is satisfied.

The crowding index uses a size-squared kernel,
\[
\chi(l)=\chi\,l^2,\qquad \chi=1\times 10^{-4}\ \mathrm{cm}^{-2},
\]
consistent with \eqref{eq:chi_l2}. Since $x(t,l)$ has units of individuals per cm, the crowding index $E(t)$ has units of individuals.

The per-unit harvest value is taken proportional to body weight, which is approximated as scaling with the cube of length:
\begin{equation}\label{eq:cL_num}
c(l)=c_0\,l^3,\qquad c_0=1\times 10^{-5}\ \text{\$\,cm}^{-3}.
\end{equation}
This specification ensures that $c'(l)>0$ on $[l_0,l_m]$.

The size-dependent fertility function used in the intrinsic replacement index \eqref{eq:RE} is specified as
\begin{equation}\label{eq:mL_num}
m(l)=
\begin{cases}
m_0\left(\dfrac{l}{l_m}\right)^3, & l\ge l_{\mathrm{mat}},\\[4pt]
0, & l<l_{\mathrm{mat}},
\end{cases}
\end{equation}
with maturation length
\[
l_{\mathrm{mat}}=50\ \text{cm},
\qquad
m_0=2.0\ \mathrm{yr}^{-1}.
\]
Only individuals above $l_{\mathrm{mat}}$ contribute to the replacement index, and the value $l_{\mathrm{mat}}=50$\,cm should be interpreted as a plausible gadoid-fishery calibration rather than a universal cod maturity threshold.

The external inflow is taken to be
\[
p=5\times 10^4\ \mathrm{individuals}\,\mathrm{yr}^{-1},
\]
representing either hatchery stocking or an effective annual juvenile input at the lower boundary. The discount rate and maximum harvesting intensity are
\[
r=0.05\ \mathrm{yr}^{-1},
\qquad
u_{\max}=0.5\ \mathrm{yr}^{-1},
\]
and all time-dependent simulations are run over a finite horizon
\[
T=60\ \text{yr}.
\]

The parameter values used in the numerical case study are summarized in Table~\ref{tab:params}.

\begin{table}[ht]
\centering
\caption{Parameter values used in the numerical case study.}
\label{tab:params}
\begin{tabular}{llll}
\hline
Symbol & Description & Value & Unit\\
\hline
$l_0$ & entry size & 20 & cm\\
$l_m$ & upper boundary size & 130 & cm\\
$L_\infty$ & asymptotic length & 135.3 & cm\\
$K$ & Brody growth coefficient & 0.17 & $\mathrm{yr}^{-1}$\\
$\alpha$ & growth crowding sensitivity & $5\times 10^{-6}$ & $\mathrm{individual}^{-1}$\\
$\mu_0$ & baseline natural mortality & 0.20 & $\mathrm{yr}^{-1}$\\
$\mu_1$ & crowding mortality sensitivity & $1\times 10^{-7}$ & $\mathrm{yr}^{-1}\,\mathrm{individual}^{-1}$\\
$\chi$ & crowding kernel coefficient & $1\times 10^{-4}$ & $\mathrm{cm}^{-2}$\\
$c_0$ & price coefficient & $1\times 10^{-5}$ & $\$\,\mathrm{cm}^{-3}$\\
$m_0$ & fertility scale & 2.0 & $\mathrm{yr}^{-1}$\\
$l_{\mathrm{mat}}$ & maturation length & 50 & cm\\
$p$ & external inflow flux & $5\times 10^{4}$ & $\mathrm{individuals}\,\mathrm{yr}^{-1}$\\
$r$ & discount rate & 0.05 & $\mathrm{yr}^{-1}$\\
$u_{\max}$ & maximum harvest intensity & 0.50 & $\mathrm{yr}^{-1}$\\
$T$ & simulation horizon & 60 & yr\\
\hline
\end{tabular}
\end{table}

\subsection{Numerical methods}\label{subsec:methods}

The calibrated functions above are implemented numerically on a uniform size grid with $N_l=400$ cells and mesh width
\[
\Delta l=\frac{l_m-l_0}{N_l}=0.275\ \text{cm}.
\]
Time is advanced with a constant step
\[
\Delta t = 0.0112239\ \text{yr},
\]
computed from a Courant--Friedrichs--Lewy (CFL) restriction with safety factor $0.8$ using the maximum growth rate over the computational domain and coefficient range.

The transport equation \eqref{eq:fish_model} is solved by a first-order upwind finite-volume scheme. At each time step, the crowding index $E(t)$ is evaluated numerically from \eqref{eq:Edef}, the coefficients $g(E,l)$ and $\mu(E,l)$ are updated, and the lower-boundary inflow is imposed through
\[
x(t,l_0)=\frac{p(t)}{g(E(t),l_0)}.
\]

The closure equation \eqref{eq:F_equation} is solved with Brent's bracketing method, and the no-harvest stationary profile is then evaluated from \eqref{eq:ss_profile_noharvest}. As a consistency check, the time-dependent PDE is also integrated forward with $u\equiv 0$ until the numerical solution stabilizes.

The intrinsic replacement index \eqref{eq:RE} is evaluated by composite Simpson quadrature. Because the simulations are run over a finite horizon $T=60$\,yr, the numerical objective is the truncated discounted revenue
\begin{equation}\label{eq:JT_num}
J_T[u]\coloneqq
\int_0^T e^{-rt}\int_{l_0}^{l_m} c(l)\,u(t,l)\,x(t,l)\,dl\,dt,
\end{equation}
which serves as a numerical approximation to the infinite-horizon objective \eqref{eq:objective_J}.

For each candidate threshold $E\in[l_0,l_m]$, the PDE is solved under the stationary threshold rule \eqref{eq:threshold_policy_stationary}, and the corresponding truncated discounted revenue $J_T(E)$ at the threshold is computed. The resulting steady-state crowding, total population, and replacement index are then used to compare the biological and economic implications of alternative threshold policies.

\begin{proposition}[Truncation bound]\label{prop:trunc}
With $\mu\ge\mu_{\min}>0$ (hence $\|x(t)\|_{L^1}\le p/\mu_{\min}$),
\[
0\le J_\infty-J_T\le\frac{e^{-rT}}{r}\bar\rho,\qquad
\bar\rho=c_0 l_m^3 u_{\max}\frac{p}{\mu_{\min}} .
\]
If the controlled state converges to a steady regime with revenue rate
$\rho_\infty$, then $\widehat J_T=J_T+e^{-rT}\rho_\infty/r$ satisfies
$|J_\infty-\widehat J_T|=\Ord(e^{-rT}\sup_{t\ge T}|\rho(t)-\rho_\infty|)$.
\end{proposition}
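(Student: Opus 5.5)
We first establish the a priori mass bound $\|x(t)\|_{L^1}\le p/\mu_{\min}$, then bound the revenue rate uniformly, and finally integrate the discounted tail. Throughout we take $p$ constant, as in the case study.

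\textbf{Step 1: mass bound.} Integrate the state equation \eqref{eq:fish_model} over $l\in(l_0,l_m)$. The flux term gives $g(E,l_m)x(t,l_m)-g(E,l_0)x(t,l_0)\ge -p$, because the outflow at $l_m$ is nonnegative and the inflow equals $p$ by \eqref{eq:model_bc}. Using $\mu\ge\mu_{\min}$, $u\ge 0$ and $x\ge0$, we obtain
\[
N'(t)\le p-\mu_{\min}N(t).
\]
Gr\"onwall then yields $N(t)\le \max\{N(0),\,p/\mu_{\min}\}$. So the stated bound $\|x(t)\|_{L^1}\le p/\mu_{\min}$ holds provided $\|\phi\|_{L^1}\le p/\mu_{\min}$. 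Otherwise, for $t\ge T$ it holds up to an additional term $e^{-\mu_{\min}T}\|\phi\|_{L^1}$, which decays and does not alter the form of the estimate. I will state this caveat explicitly rather than hide it.

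\textbf{Step 2: uniform rate bound.} Let $\rho(t)=\int c\,u\,x\,dl$. Since $c(l)=c_0l^3$ is increasing, $c\le c_0l_m^3$. Together with $u\le u_{\max}$ and Step 1, this gives $0\le\rho(t)\le\bar\rho$.

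\textbf{Step 3: tail integral.} We have $J_\infty-J_T=\int_T^\infty e^{-rt}\rho(t)\,dt$. The integrand is nonnegative, which gives the lower bound $0$. It is at most $\bar\rho\int_T^\infty e^{-rt}\,dt=\bar\rho\,e^{-rT}/r$, which gives the upper bound.

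\textbf{Step 4: tail-corrected estimate.} Use the identity
\[
J_\infty-\widehat J_T=\int_T^\infty e^{-rt}\bigl(\rho(t)-\rho_\infty\bigr)\,dt,
\]
which holds because $\int_T^\infty e^{-rt}\rho_\infty\,dt=e^{-rT}\rho_\infty/r$. Taking absolute values gives
\[
|J_\infty-\widehat J_T|\le \frac{e^{-rT}}{r}\,\sup_{t\ge T}|\rho(t)-\rho_\infty|,
\]
with implied constant $1/r$.

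The main obstacle is Step 1. Its rigor depends on the assumed well-posedness in $C([0,\infty);L^1)$, and integrating the equation in $l$ requires at least a mild-solution argument. I would justify it via the characteristic representation: mass along characteristics decays at rate $\ge\mu_{\min}$, and the boundary supplies mass at rate $p$. The initial-mass caveat noted in Step 1 must also be handled.
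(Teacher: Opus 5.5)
Your proof is correct and matches the argument the paper leaves implicit. The paper states this proposition without proof: its parenthetical ``hence'' corresponds to your Steps 1--3, and the tail-corrected estimate is your Step~4 identity with implied constant $1/r$. Your initial-mass caveat is a genuine correction to that ``hence'', and it can be removed whenever $T\ge\tau_{\max}:=(l_m-l_0)/g_{\min}$: by then every individual present at $t=0$ has exited through $l_m$, so the cohort form of your characteristic argument gives $N(t)\le\int_{t-\tau_{\max}}^{t}p\,e^{-\mu_{\min}(t-s)}\,ds\le p/\mu_{\min}$ for all $t\ge T$, and the stated bound with exactly $\bar\rho$ then holds with no condition on $\phi$.
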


\subsection{Results}\label{subsec:results}

Solving the closure equation \eqref{eq:F_equation} gives the no-harvest
equilibrium crowding level
\[
E^*=103108.17.
\]
The associated stationary population size is
\[
N^*=\int_{l_0}^{l_m}x^*(l)\,dl = 237004.87.
\]
The corresponding stationary size profile is shown in Figure~\ref{fig:steady_state_profile}.

\begin{figure}[htbp]
\centering
\includegraphics[width=0.8\textwidth]{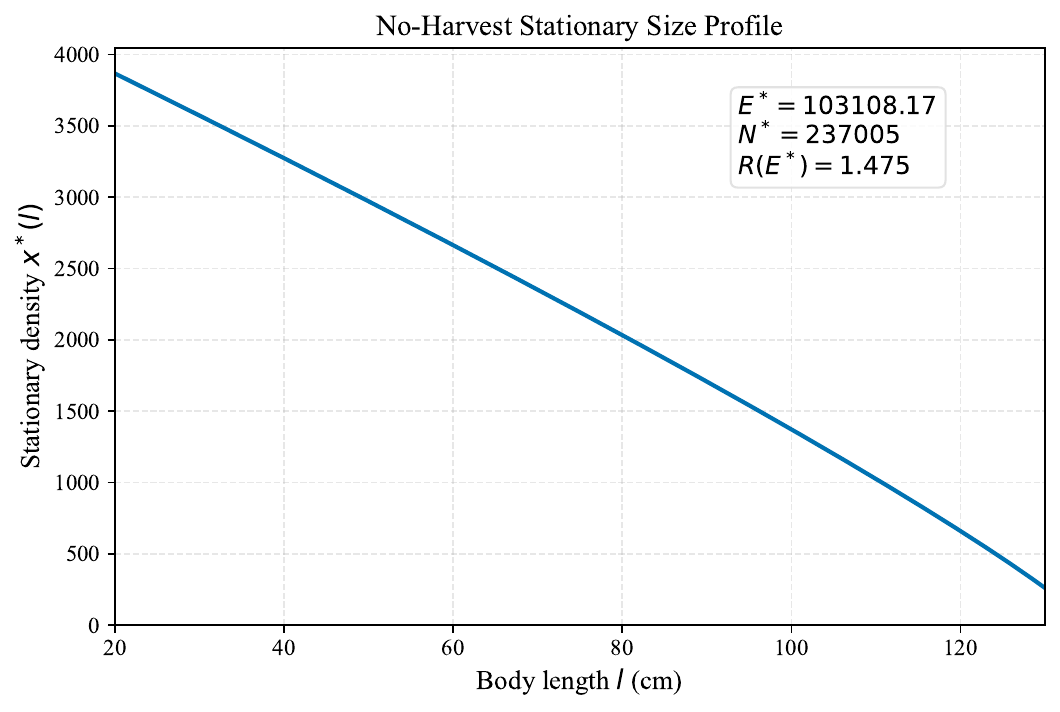}
\caption{No-harvest stationary size profile $x^*(l)$ for the parameter values
in Table~\ref{tab:params}. The figure also reports the corresponding baseline
quantities $E^*=103108.17$, $N^*=237005$, and $R(E^*)=1.475$.}
\label{fig:steady_state_profile}
\end{figure}

Figure~\ref{fig:replacement_index} shows the intrinsic replacement index
$R(E)$ as a function of crowding. The index is strictly decreasing over the
computed range, confirming that stronger crowding lowers intrinsic replacement
capacity. At the no-harvest equilibrium one has $R(E^*)=1.474679>1$, while the
critical value satisfying $R(E)=1$ is $E^*=179008.21$. Since the
boundary inflow is exogenous, this curve is interpreted as a biological
viability diagnostic rather than a persistence threshold.

\begin{figure}[htbp]
\centering
\includegraphics[width=0.8\textwidth]{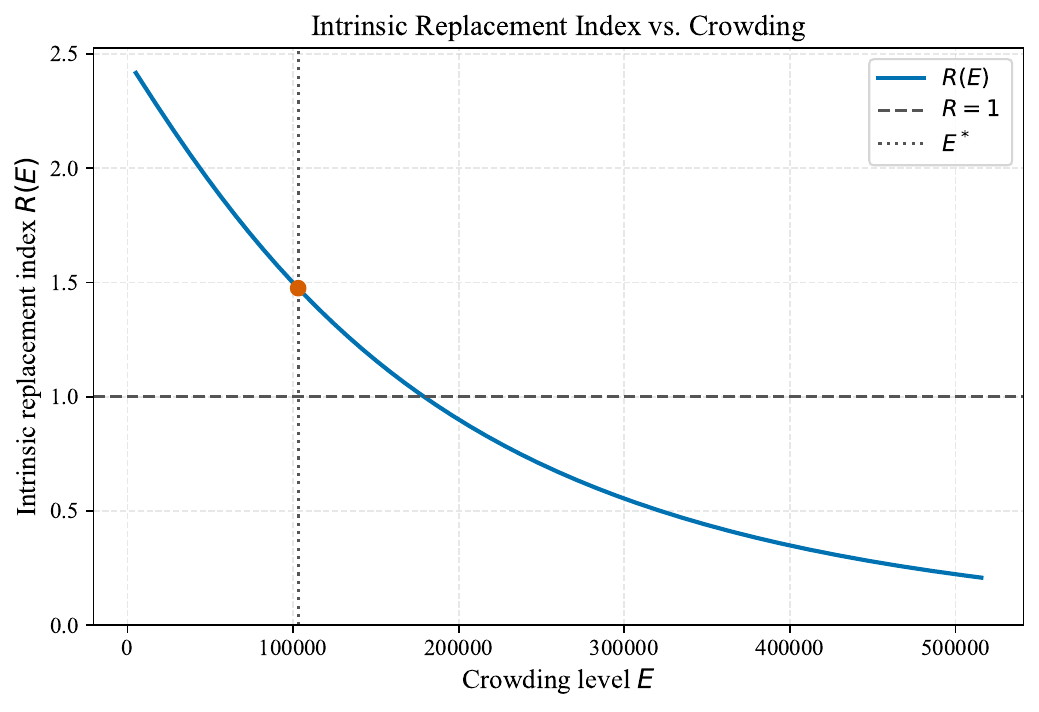}
\caption{Intrinsic replacement index $R(E)$ as a function of the crowding
level. The dashed horizontal line marks $R=1$, and the dashed vertical line
marks the no-harvest equilibrium $E^*=103108.17$.}
\label{fig:replacement_index}
\end{figure}

The full PDE \eqref{eq:fish_model} is solved forward in time under three
representative threshold policies,
\[
 l=40\ \text{cm},\qquad l=60\ \text{cm},\qquad l=80\ \text{cm}.
\]
The resulting trajectories of $E(t)$ and $N(t)$ are shown in
Figure~\ref{fig:time_dynamics}. In the present simulations, the trajectories of
$E(t)$ and $N(t)$ approach their threshold-dependent stationary regimes within
approximately 8--10 years, based on the time required to enter and remain
within 1\% of their terminal values. Table~\ref{tab:numerical_convergence} lists the numerically observed convergence times for the three representative thresholds. Lower thresholds produce more aggressive harvesting and therefore lower
long-run crowding and population levels.

\begin{figure}[htbp]
\centering
\includegraphics[width=0.8\textwidth]{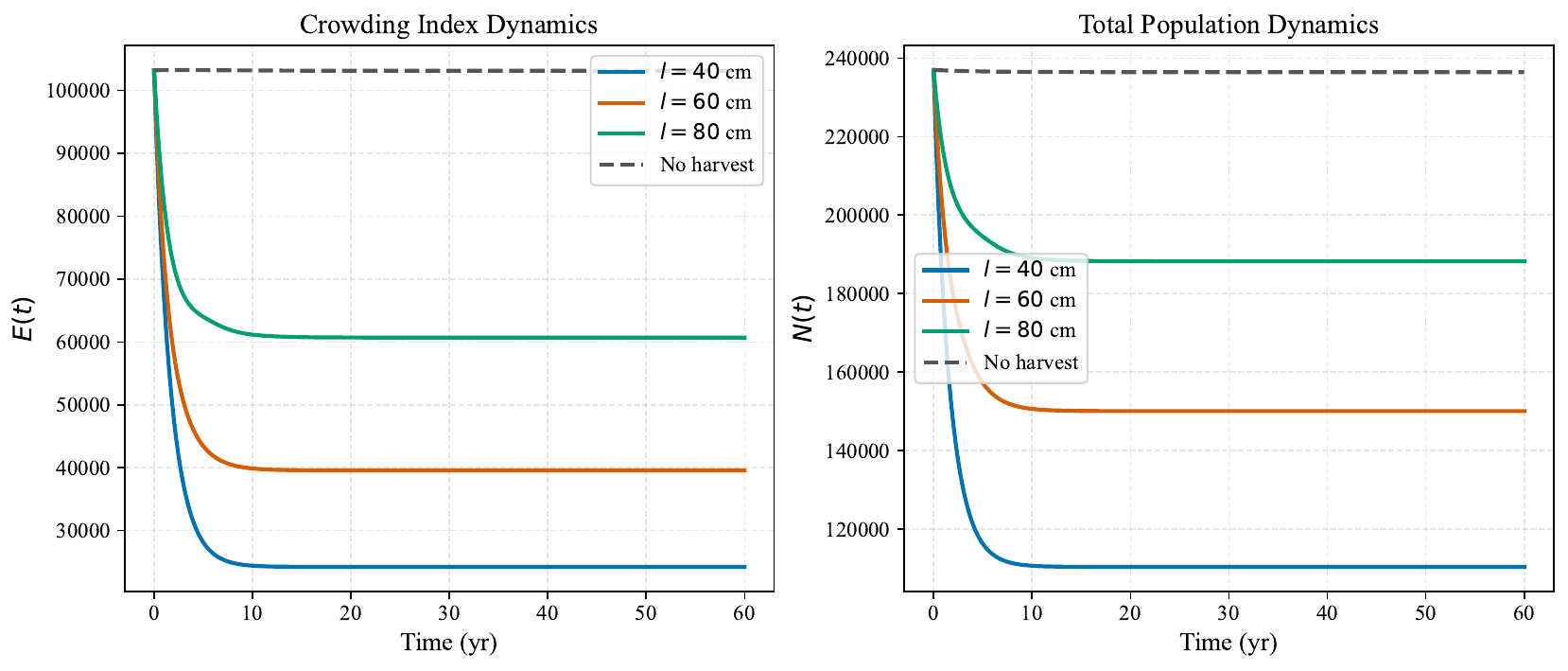}
\caption{Time evolution of the crowding index $E(t)$ and total population
$N(t)$ under representative threshold harvesting policies.}
\label{fig:time_dynamics}
\end{figure}

\begin{table}[htbp]
\centering
\caption{Convergence times for representative thresholds.\label{tab:numerical_convergence}}
\begin{tabular}{cccc}
\hline
Threshold $l$ (cm) & $E^*$ & $N^*$ & 1\% convergence time (yr)\\
\hline
40 & 24229.07 & 110399.50 & $E$: 9.75,\ $N$: 7.76\\
60 & 39572.49 & 150074.01 & $E$: 9.51,\ $N$: 8.26\\
80 & 60697.30 & 188228.22 & $E$: 9.51,\ $N$: 8.26\\
\hline
\end{tabular}
\end{table}

The numerically computed truncated discounted revenue $J_T(l)$ is shown in
Figure~\ref{fig:revenue_threshold}. Over the tested threshold grid, the map
$l\mapsto J_T(l)$ is unimodal and attains its maximum at the interior
threshold
\[
l=66.45\ \text{cm}.
\]
At this threshold, the maximum truncated discounted revenue is
\[
J_T(l)=1.9698\times 10^6.
\]
The corresponding steady state remains biologically viable, with
\[
R(E^*)=1.977621,\qquad
E^*=45967.96,\qquad
N^*=163572.21.
\]
Thus, for the present calibration, the economically optimal threshold also lies
in the region satisfying the viability target $R(E)\ge 1$.

\begin{figure}[ht]
\centering
\includegraphics[width=0.85\textwidth]{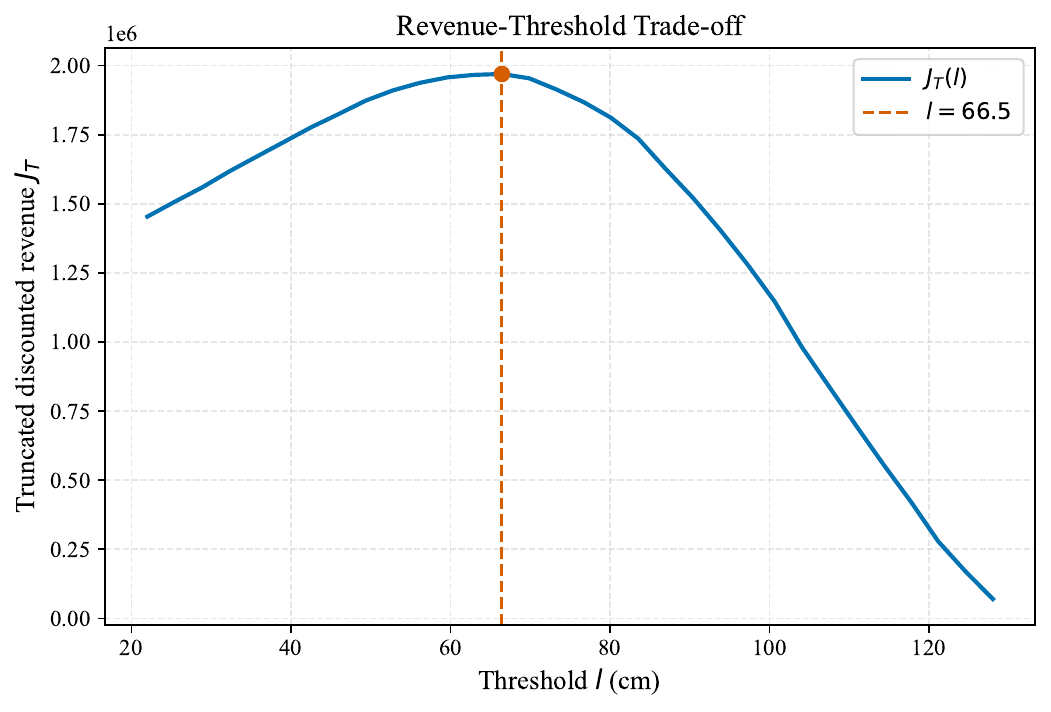}
\caption{Truncated discounted revenue $J_T$ as a function of threshold size
$l$. The vertical dashed line marks the maximizing threshold
$l=66.45$\,cm.}
\label{fig:revenue_threshold}
\end{figure}

Figure~\ref{fig:profile_comparison} compares the no-harvest stationary profile
with the stationary profile associated with the optimal threshold
$\hat l=66.45$\,cm. Relative to the no-harvest baseline, the
optimal policy depletes larger individuals above the threshold while preserving
the smaller size classes below it. The resulting redistribution is consistent
with the threshold structure predicted by the analysis of
Section~\ref{sec:objective}.

\begin{figure}[ht]
\centering
\includegraphics[width=0.85\textwidth]{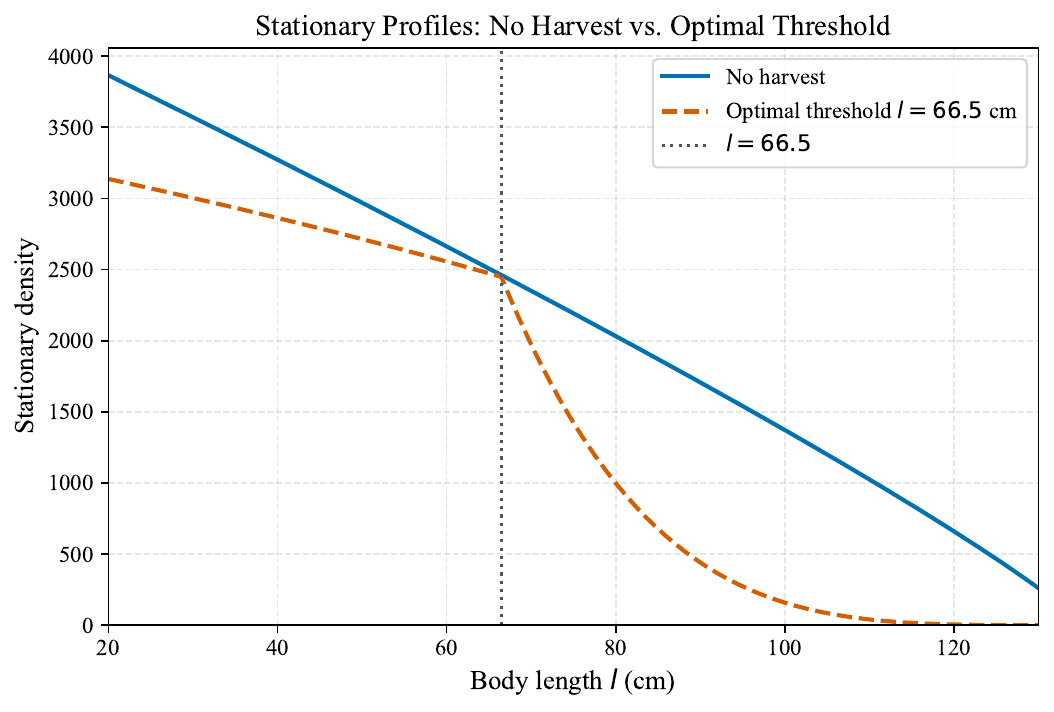}
\caption{Stationary size profiles under no harvesting and under the optimal
threshold policy. The vertical line marks
$E^*=66.45$\,cm.}
\label{fig:profile_comparison}
\end{figure}

To validate the finite-horizon bound and tail correction shown in Proposition~\ref{prop:trunc}, we show the finite-horizon truncation results in Table~\ref{tab:horizon} and Figure~\ref{fig:horizon}. Because the system reaches its steady regime well before $T$, $\widehat J_T$ is essentially horizon-independent. Tail corrected truncation bound provides a more accurate approximation to $J_{\infty}$: the raw $J_T$ at $T=60$ underestimates $J_\infty \approx 2.046 \times 10^6$ by about 4\%; the corrected estimator reduces this to $< 0.1\%$ already at $T=40$. In Figure~\ref{fig:horizon}, the curves for the tail corrected estimator and for $J_\infty$ overlap. 

\begin{table}[t]
\centering
\caption{Finite-horizon truncation at $E=66.45$~cm
($\rho_\infty=8.14\times10^4$). $\widehat J_T$ is essentially horizon-independent;
$J_\infty\approx2.046\times10^6$.}
\label{tab:horizon}
\begin{tabular}{cccc}
\toprule
$T$ (yr) & $J_T$ & $e^{-rT}\rho_\infty/r$ & $\widehat J_T$\\
\midrule
20 & $1.4439\times10^6$ & $5.99\times10^5$ & $2.0430\times10^6$\\
40 & $1.8245\times10^6$ & $2.20\times10^5$ & $2.0449\times10^6$\\
60 & $1.9645\times10^6$ & $8.11\times10^4$ & $2.0456\times10^6$\\
80 & $2.0160\times10^6$ & $2.98\times10^4$ & $2.0459\times10^6$\\
100& $2.0350\times10^6$ & $1.10\times10^4$ & $2.0460\times10^6$\\
200& $2.0459\times10^6$ & $7.4\times10^1$  & $2.0460\times10^6$\\
\bottomrule
\end{tabular}
\end{table}
\begin{figure}[t]\centering
\includegraphics[width=0.85\linewidth]{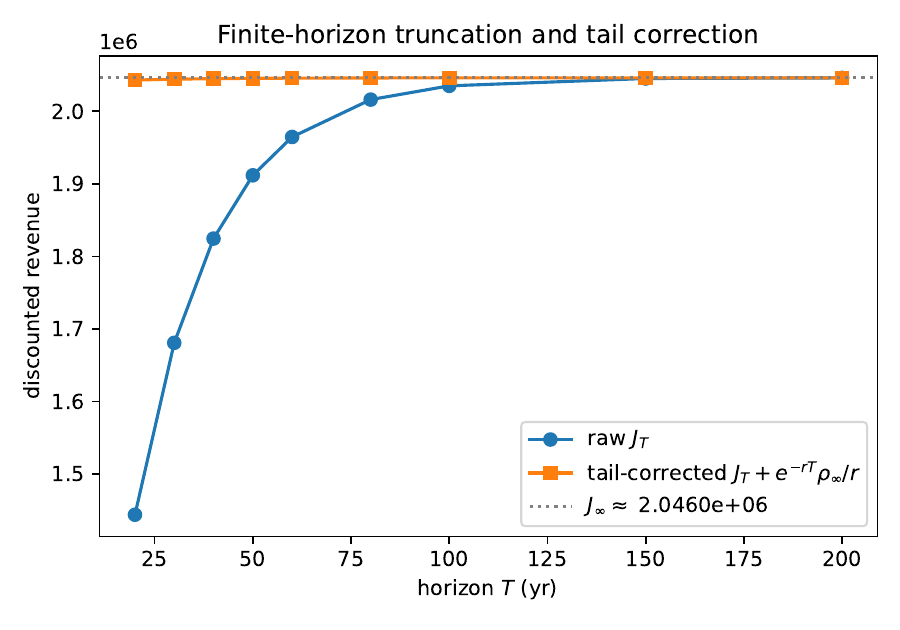}
\caption{Raw $J_T$ vs.\ tail-corrected $\widehat J_T$; the
correction removes $\approx4\%$ of truncation error at $T=60$.}
\label{fig:horizon}
\end{figure}

We conduct convergence tests to show the robustness of the conclusions.
A spatial refinement study at the optimal threshold gives first-order convergence consistent with the upwind scheme (Table~\ref{tab:mesh}; Figure~\ref{fig:mesh}).
\begin{table}[t]\centering
\caption{Spatial mesh convergence ($E^*=66.45$~cm, $T=60$~yr,
CFL$=0.8$). Observed order $p\approx1.0$--$1.3$; production resolution $N_l=400$
within $0.3\%$ of refined. Temporal study (CFL$\in[0.05,0.8]$, $N_l=800$): order
$\approx1$, $J_T$ stable to $<0.05\%$.}
\label{tab:mesh}
\begin{tabular}{ccccc}
\toprule
$N_l$ & $\Delta l$ (cm) & $E^*$ & $N^*$ & $J_T$\\
\midrule
50  & 2.200  & 46{,}090 & 162{,}763 & $2.00491\times10^6$\\
100 & 1.100  & 45{,}892 & 162{,}902 & $1.98232\times10^6$\\
200 & 0.550  & 45{,}792 & 162{,}972 & $1.97096\times10^6$\\
400 & 0.275  & 46{,}022 & 163{,}545 & $1.96454\times10^6$\\
800 & 0.1375 & 45{,}997 & 163{,}563 & $1.96168\times10^6$\\
1600& 0.0688 & 45{,}985 & 163{,}572 & $1.96025\times10^6$\\
3200& 0.0344 & 45{,}943 & 163{,}509 & $1.95964\times10^6$\\
\bottomrule
\end{tabular}
\end{table}

The observed spatial order is $p\approx$ 1.0--1.3; the temporal study (varying CFL at fixed mesh) likewise yields order $\approx$ 1, with $J_T$ stable to $<0.05\%$ across CFL$\in [0.05,0.8]$. The production resolution $(N_l=400,\mathrm{CFL}=0.8)$ is thus within 0.3\% of the refined values.

\begin{figure}[t]\centering
\includegraphics[width=0.85\linewidth]{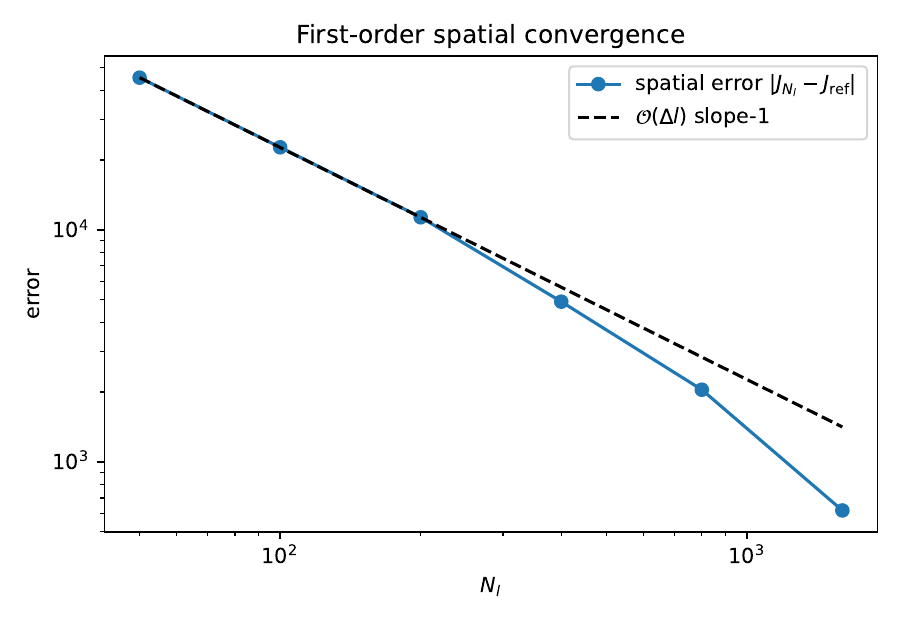}
\caption{Log--log spatial convergence of $J_T$; dashed line is
the first-order reference slope.}
\label{fig:mesh}
\end{figure}

\subsection{Weak-coupling validation}
\label{subsec:weak_validation}

To quantitatively validate the weak-coupling approximation, we introduce an
explicit coupling parameter \(\varepsilon\in(0,1]\) into the density-dependent
terms:
\[
g(E,l;\varepsilon)=\frac{K_{\rm vb}(L_\infty-l)}{1+\varepsilon\alpha E},
\qquad
\mu(E,l;\varepsilon)=\mu_0+\varepsilon\mu_1E .
\]
Then
\[
\partial_E g(E,l;\varepsilon)=\mathcal O(\varepsilon),
\qquad
\partial_E\mu(E,l;\varepsilon)=\mathcal O(\varepsilon),
\]
uniformly on bounded \(E\)-intervals.

Along a fixed admissible state-control trajectory, let \(\lambda\) solve the
full adjoint system and let \(\tilde\lambda\) solve the reduced adjoint system.
The full adjoint may be written in current-value form as
\[
-\partial_t \lambda
-
g(E,l;\varepsilon)\partial_l\lambda
=
c(l)u(l)
-
\bigl(r+\mu(E,l;\varepsilon)+u(l)\bigr)\lambda
+
\chi(l)\mathcal C_\varepsilon[\lambda](t),
\]
where
\[
\mathcal C_\varepsilon[\lambda](t)
=
\int_{l_0}^{l_m}
x(t,s)
\left(
\partial_s\lambda(t,s)\partial_E g(E,s;\varepsilon)
-
\lambda(t,s)\partial_E\mu(E,s;\varepsilon)
\right)\,ds .
\]
The reduced adjoint is obtained by omitting
\(\chi(l)\mathcal C_\varepsilon[\lambda](t)\):
\[
-\partial_t \tilde{\lambda}
-
g(E,l;\varepsilon)\partial_l\tilde{\lambda}
=
c(l)u(l)
-
\bigl(r+\mu(E,l;\varepsilon)+u(l)\bigr)\tilde{\lambda}.
\]

Let \(e=\lambda-\tilde\lambda\). Since
\(\partial_E g=\mathcal O(\varepsilon)\) and \(\partial_E\mu=\mathcal O(\varepsilon)\), the
nonlocal source satisfies
\[
|\mathcal C_\varepsilon[\tilde\lambda](t)|
\le C\varepsilon .
\]
With finite parameter values in Table~\ref{tab:params}, the error
equation for \(e\) gives, by integration along backward characteristics and
Gr\"onwall's inequality,
\[
\|\lambda-\tilde\lambda\|_{L^\infty((0,T)\times(l_0,l_m))}
\le C_T\varepsilon .
\]
Thus the weak-coupling approximation is first-order consistent in the
small-sensitivity regime. The same estimate applies to the switching functions
because \(S-\tilde S=-(\lambda-\tilde\lambda)\).

To test this estimate numerically, we solved the full and reduced adjoint
systems along the same threshold-harvesting trajectory for
\(\varepsilon\in\{0.2,0.1,0.05,0.025,0.0125\}\). Table~\ref{tab:weak_coupling_error}
shows that the maximum adjoint discrepancy decreases approximately linearly
with \(\varepsilon\), and Figure~\ref{fig:weak_coupling_error} confirms the
expected slope-one scaling on a log--log plot.

\begin{table}[htbp]
\centering
\caption{Numerical evaluation of the weak-coupling adjoint discrepancy across decreasing values of the coupling parameter $\varepsilon$, demonstrating the $\mathcal O(\varepsilon)$ empirical convergence rate.}
\label{tab:weak_coupling_error}
\begin{tabular}{ccc}
\hline
\textbf{Coupling Parameter} ($\varepsilon$) & \textbf{Max Adjoint Error} ($\|\lambda - \tilde{\lambda}\|_{\infty}$) & \textbf{Error-to-Epsilon Ratio} ($\|\lambda - \tilde{\lambda}\|_{\infty} / \varepsilon$) \\ \hline
0.2000 & 0.274585 & 1.372924 \\
0.1000 & 0.147568 & 1.475677 \\
0.0500 & 0.076619 & 1.532377 \\
0.0250 & 0.039054 & 1.562159 \\
0.0125 & 0.019718 & 1.577418 \\ \hline
\end{tabular}
\end{table}

\begin{figure}[htbp]
    \centering
    \includegraphics[width=0.9\linewidth]{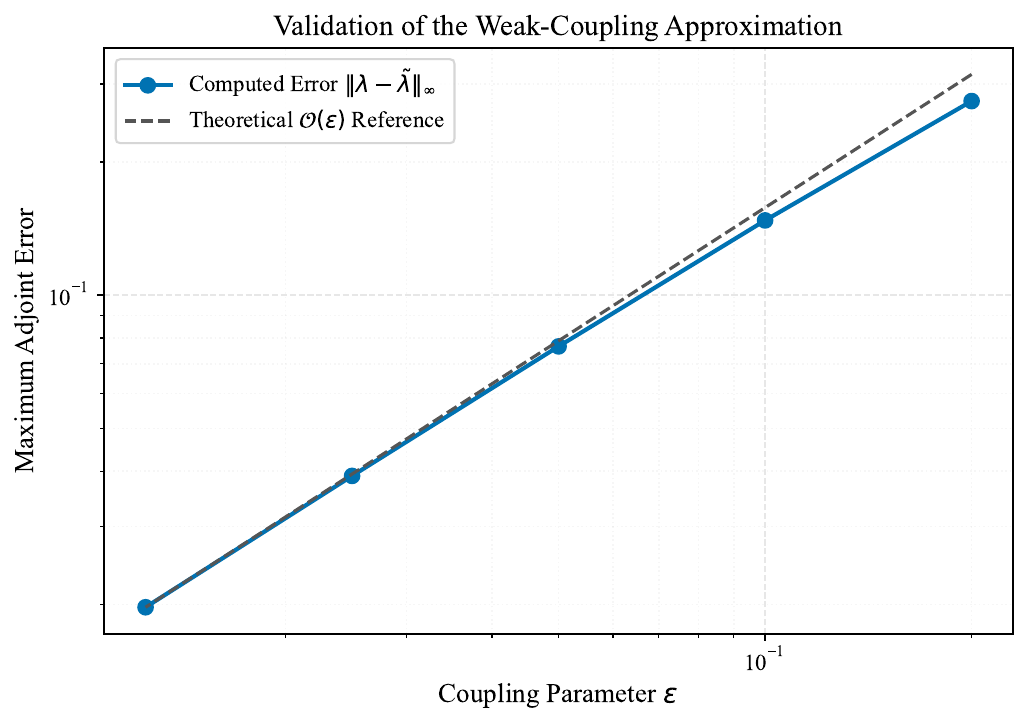}
    \caption{Log-log convergence plot of the maximum absolute discrepancy $\|\lambda - \tilde{\lambda}\|_{\infty}$ between the full non-local adjoint system and the reduced system as a function of the coupling parameter $\varepsilon$. The dashed line represents the theoretical $\mathcal O(\varepsilon)$ reference slope, verifying the strict linear asymptotic bound of the weak-coupling approximation.}
    \label{fig:weak_coupling_error}
\end{figure}

\subsection{Parameter sensitivity analysis}
\label{subsec:sensitivity}

To assess whether the optimal threshold is an artifact of the baseline
calibration, we performed one-at-a-time sensitivity runs for seven parameters:
$\alpha$, $r$, $l_{\rm mat}$, $K$, $\mu_0$, $p$, and $u_{\max}$. Specifically, we varied
\[
\alpha \in \{0,\;2.5\times 10^{-6},\;5\times 10^{-6},\;7.5\times 10^{-6},\;10^{-5}\},
\]
\[
r\in\{0.02,\;0.05,\;0.08,\;0.10\},
\]
\[
l_{\mathrm{mat}}\in\{40,\;50,\;60,\;70\}\ {\rm cm},
\]
\[
K\in \{0.10,0.15,0.17,0.20,0.25\}\ {\rm yr}^{-1} 
\]
\[
\mu_0\in \{0.10,0.15,0.20,0.25,0.30\}\ {\rm yr}^{-1},
\]
\[
p\in \{2.5\times 10^4,5.0\times 10^4,7.5\times 10^4,1.0\times 10^5\}\ {\rm ind.yr}^{-1},
\]
and
\[
u_{\max} \in \{0.25,0.50,0.75,1.00\} \ {\rm yr}^{-1}.
\]
For each parameter value, all other parameters were held at their baseline
values from Table~\ref{tab:params}. We recomputed the threshold-dependent
finite-horizon objective \(J_T(l)\), the maximizing threshold
\(l\), the associated steady crowding level \(E^*\), and
the intrinsic replacement index \(R(E^*)\).

The sensitivity runs are summarized in Table~\ref{tab:sensitivity} for all seven parameters and visually depicted in Figure~\ref{fig:sensitivity_summary} for $\alpha$, $r$ and $l_{\rm mat}$. 
Across the tested ranges, the revenue-maximizing threshold strictly remains interior, confirming that extreme bang-bang policies (e.g., zero harvest or harvesting at the minimum possible size) are suboptimal. As shown in Figure~\ref{fig:sensitivity_summary}(a), increasing the crowding sensitivity \(\alpha\) shifts the optimal threshold downward. This occurs because stronger density dependence significantly suppresses individual growth rates, increasing the time-discounting penalty of waiting for fish to grow larger, thus making earlier harvesting more economically favorable. Similarly, increasing the discount rate \(r\) shifts the threshold downward (Figure~\ref{fig:sensitivity_summary}b), reflecting the greater economic weight placed on immediate harvest yields over delayed future revenues. Finally, Figure~\ref{fig:sensitivity_summary}(c) illustrates that increasing the maturation length \(l_{\mathrm{mat}}\) monotonically lowers the intrinsic replacement index, thereby narrowing the set of thresholds capable of satisfying the biological viability target \(R(E^*)\ge 1\). 

\begin{table}[htbp]
\centering
\caption{Expanded one-at-a-time sensitivity (others at baseline).
Interior optimum robust; at $\mu_0=0.30$, $\R(E^*)<1$.}
\label{tab:sensitivity}
\begin{tabular}{llcccc}
\hline
Parameter varied & Value & \(l\) (cm) & \(J_T(l)\) & \(E^*\) & \(R(E^*)\)\\
\toprule
\(\alpha\) & \(0\) & 76.71 & \(2.68\times10^6\) & 59{,}001 & 2.351 \\
  & \(2.5\times10^{-6}\) & 69.87 & \(2.260\times10^6\) & 50{,}256 & 2.144 \\
  & \(5.0\times10^{-6}\) & 66.45 & \(1.970\times10^6\) & 45{,}968 & 1.978 \\
  & \(7.5\times10^{-6}\) & 59.61 & \(1.760\times10^6\) & 38{,}749 & 1.896 \\
 & \(1.0\times10^{-5}\) & 56.19 & \(1.602\times10^6\) & 35{,}259 & 1.809 \\
\midrule
\(r\) & \(0.02\) & 66.45 & \(3.297\times10^6\) & 45{,}967 & 1.978 \\
  & \(0.05\) & 66.45 & \(1.970\times10^6\) & 45{,}967 & 1.978 \\
  & \(0.08\) & 59.61 & \(1.416\times10^6\) & 39{,}272 & 2.045 \\
  & \(0.10\) & 59.61 & \(1.213\times10^6\) & 39{,}272 & 2.045 \\
\midrule
\(l_{\mathrm{mat}}\) (cm) & \(40\) & 66.45 & \(1.970\times10^6\) & 45{,}967 & 2.025 \\
                & \(50\) & 66.45 & \(1.970\times10^6\) & 45{,}967 & 1.978 \\
                & \(60\) & 66.45 & \(1.970\times10^6\) & 45{,}967 & 1.898 \\
                & \(70\) & 66.45 & \(1.970\times10^6\) & 45{,}967 & 1.774 \\
\midrule
$K$ (yr$^{-1}$) & 0.12 & 56.75 & $1.318\times10^6$ & 34{,}783 & 1.413\\
                & 0.15 & 61.50 & $1.712\times10^6$ & 40{,}336 & 1.786\\
                & 0.17 & 64.00 & $1.968\times10^6$ & 43{,}539 & 2.002\\
                & 0.20 & 67.25 & $2.340\times10^6$ & 48{,}020 & 2.275\\
                & 0.25 & 71.50 & $2.920\times10^6$ & 54{,}158 & 2.616\\
\midrule
$\mu_0$ (yr$^{-1}$) & 0.10 & 71.25 & $3.784\times10^6$ & 76{,}884 & 5.471\\
                    & 0.15 & 67.50 & $2.673\times10^6$ & 56{,}838 & 3.181\\
                    & 0.20 & 64.00 & $1.968\times10^6$ & 43{,}539 & 2.002\\
                    & 0.25 & 60.50 & $1.498\times10^6$ & 34{,}216 & 1.334\\
                    & 0.30 & 56.75 & $1.170\times10^6$ & 27{,}419 & 0.929\\
\midrule
$p$ (ind.\,yr$^{-1}$) & $2.5\times10^4$ & 69.50 & $1.157\times10^6$ & 25{,}172 & 2.194\\
                      & $5.0\times10^4$ & 64.00 & $1.968\times10^6$ & 43{,}539 & 2.002\\
                      & $7.5\times10^4$ & 60.00 & $2.596\times10^6$ & 58{,}219 & 1.859\\
                      & $1.0\times10^5$ & 56.75 & $3.109\times10^6$ & 70{,}978 & 1.741\\
\midrule
$u_{\max}$ (yr$^{-1}$) & 0.25 & 55.50 & $1.634\times10^6$ & 51{,}133 & 1.927\\
                       & 0.50 & 64.00 & $1.968\times10^6$ & 43{,}539 & 2.002\\
                       & 0.75 & 67.75 & $2.091\times10^6$ & 41{,}237 & 2.026\\
                       & 1.00 & 70.00 & $2.150\times10^6$ & 40{,}307 & 2.035\\
\bottomrule
\end{tabular}
\end{table}

Finally, faster growth ($K\uparrow$) and a higher harvest cap
($u_{\max}\uparrow$) raise the optimal threshold, since it pays to let fish reach
more valuable classes that are then removed efficiently. Higher mortality
($\mu_0\uparrow$), stronger crowding ($\alpha\uparrow$), heavier discounting
($r\uparrow$) and larger inflow ($p\uparrow$) lower it -- the first three by
raising the opportunity cost of waiting, the last by intensifying density-dependent
suppression. The optimum stays strictly interior; $\mu_0=0.30$ with
$R(E^*)=0.93<1$ illustrates the viability boundary.

\begin{figure}[htbp]
    \centering
    \includegraphics[width=0.95\linewidth]{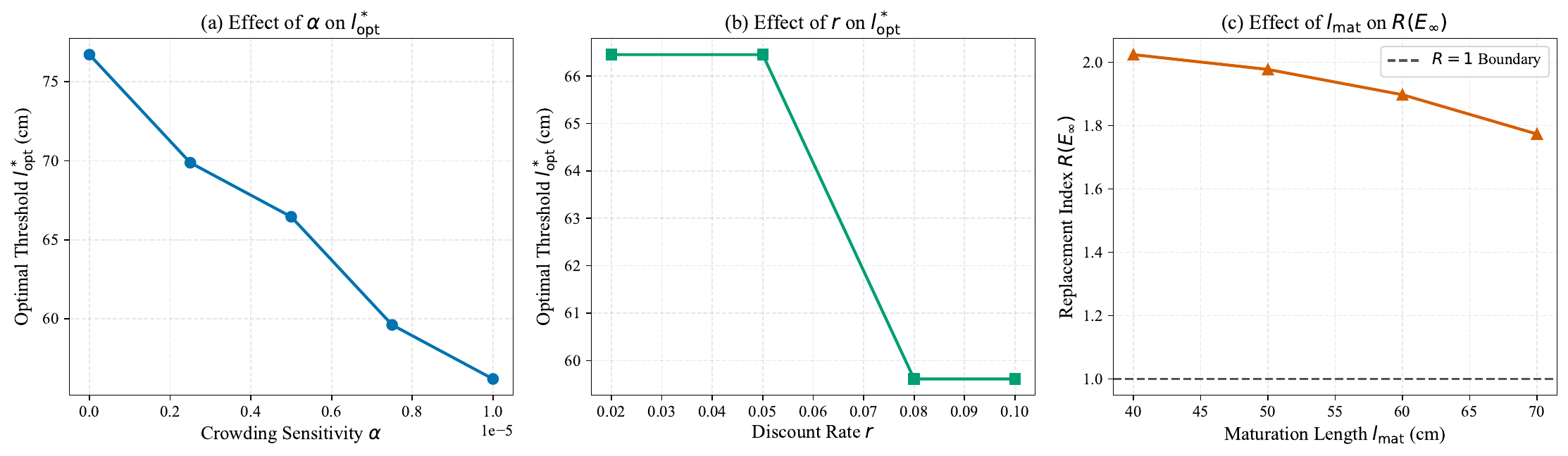}
    \caption{Sensitivity of the revenue-maximizing threshold and biological viability index to varying model parameters. Panel (a) shows the effect of the crowding sensitivity \(\alpha\) on the optimal harvest threshold \(E\). Panel (b) illustrates the impact of the economic discount rate \(r\) on \(E\). Panel (c) displays the response of the intrinsic replacement index \(R(E)\) to changes in the biological maturation length \(l_{\mathrm{mat}}\), with the dashed horizontal line indicating the biological viability boundary \(R=1\). In each run, the specific parameter is varied while keeping all other parameters fixed at their baseline values from Table~\ref{tab:params}.}
    \label{fig:sensitivity_summary}
\end{figure}

\subsection{Summary of numerical findings}

The results show that the proposed size-structured framework captures three
features that are central to managed gadoid fisheries: nonlocal crowding,
size-selective harvesting, and externally maintained recruitment. The numerical
baseline is biologically favorable in the sense that the no-harvest steady
state satisfies
\[
R(E^*)=1.474679>1,
\]
while the threshold policy that maximizes truncated discounted revenue also
remains in the viable region, with
\[
E^*=66.45\ \text{cm},\qquad
R(E^*)=1.977621.
\]
Thus, for the present calibration, the economically preferred harvesting rule
is compatible with the imposed biological viability target.

The sensitivity analysis in Section~\ref{subsec:sensitivity} shows that the
existence of an interior optimal threshold is robust across moderate variations
in system parameters. However, the exact threshold and the replacement index are parameter-sensitive. This confirms that the numerical
case study should be interpreted as a calibrated management illustration rather
than a stock-assessment prediction for a specific cod stock.

\section{Conclusion}\label{sec:conclusion}

This paper developed a nonlinear size-structured fishery model with nonlocal
crowding, exogenous lower-boundary recruitment, and size-selective harvesting.
For the no-harvest case, we derived an explicit stationary profile and a scalar
closure equation for the crowding level, and established an
existence--uniqueness result for the resulting steady state. We then introduced
an intrinsic replacement index appropriate to the externally forced setting,
clarifying that it should be interpreted as a biological viability measure
rather than as a stand-alone persistence criterion.

On the control side, we formulated an infinite-horizon harvesting problem and
derived formal first-order necessary conditions through a Pontryagin-type
variational argument. Under a reduced switching-function analysis and a
single-crossing assumption, this yielded a threshold-type bang--bang harvest
policy. In the numerical case study, the no-harvest baseline was viable, the
replacement index decreased with crowding, and the truncated discounted revenue
was maximized at the interior threshold
\[
E^*=66.45\ \text{cm}.
\]
The corresponding optimal steady state also satisfied the viability target,
showing that, for the present calibration, economic performance and biological
protection are compatible.

From a management perspective, the threshold structure is especially
interpretable.
In Atlantic cod management, size- and catch-based rules are standard. NOAA notes that cod regulations include minimum fish sizes, possession limits, and closed seasons, while Northeast multispecies management materials document stock-level catch-limit frameworks \cite{tallack_regional_2009,noaa_fisheries_atlantic_2026,yu2026age}. 
The present model provides a mathematical rationale for such policies by
showing how a size threshold can emerge from the trade-off between immediate
harvest value and the continuation value of leaving smaller individuals in the
stock.

Three principal limitations, each comes with an explicit remedy.
(i) Recruitment is exogenous, so $\R(E)$ is a viability diagnostic; Section~4.3 and
Proposition~\ref{prop:endo} show how an endogenous Beverton--Holt/Ricker closure
restores the sharp threshold. (ii) The optimality system was formal in the main text; Appendix~\ref{appsec:functional} details well-posedness, existence and a rigorous Pontryagin characterization, and the weak-coupling reduction is controlled by
Proposition~\ref{prop:wcbound}. (iii) The objective is finite-horizon;
Proposition~\ref{prop:trunc} bounds the error and $\widehat J_T$ reduces it below
$0.1\%$.

Future extensions of the current study include:
(1) Close the model with an endogenous stock--recruitment law and
carry out the persistence/bifurcation analysis of Section~\ref{subsec:endo}, including Ricker
overcompensation. (2) Develop a rigorous singular-arc analysis where $\{S=0\}$ has
positive measure. (3) Treat the infinite-dimensional problem via HJB/viscosity
solutions and establish turnpike properties of the discounted structured system.
(4) Replace the deterministic inflow by a stochastically forced boundary
($p$ a random process) and study robust thresholds. (5) Extend to spatial and
multi-species/size-spectrum settings. (6) Add size-dependent harvest costs and
market price feedback to the objective.

Overall, the results support the use of size-structured threshold harvesting as
a transparent management principle in fisheries where recruitment is partly
externally maintained and regulations act directly on fish size. 

\appendix
\section*{Appendix}
\section{Functional setting and rigorous optimality}
\label{appsec:functional}
In this part, we detail the rigorous mathematical functional analytic framework for the well-posedness of the state equation \eqref{eq:state_pmp}, the regularity of the control-to-state map $u\mapsto x^u$, the existence of the optimal control, and the rigorous Pontryagin maximum principle.

\subsection{Function spaces, admissible controls, regularity}
Fix $T\in(0,\infty)$, $\Qt=(0,T)\times(l_0,l_m)$. The state space is
$\X=C([0,T];L^1(l_0,l_m))$ and the admissible set
\[
\Uad=\{u\in L^\infty(\Qt):0\le u\le u_{\max}\ \text{a.e.}\},
\]
convex, bounded, weak-$\ast$ closed, hence weak-$\ast$ sequentially compact in
$L^\infty(\Qt)$. We add the following assumption:
\begin{assumption}[Lipschitz regularity]\label{ass:lip}
On every $[0,M_E]\times[l_0,l_m]$ the maps $g,\mu,\partial_E g,\partial_E\mu$ are
Lipschitz in $(E,l)$, and $\chi\in L^\infty$. The calibrated forms satisfy this on
the a priori compact range $E\in[0,M_E]$ of Theorem~\ref{thm:wp}.
\end{assumption}

\subsection{Well-posedness and a priori bounds}
\begin{theorem}[State well-posedness]\label{thm:wp}
Under Assumptions~\ref{ass:standing} and~\ref{ass:lip}, with $p\in L^\infty_+$
and $\phi\in L^1_+$, every $u\in\Uad$ yields a unique nonnegative mild solution
$x^u\in\X$, with data-dependent bounds (uniform in $u$)
$\|x^u(t)\|_{L^1}\le M_1$, $0\le E^u(t)\le M_E$, and Lipschitz control-to-state
map $\|x^{u_1}-x^{u_2}\|_\X\le L_{cs}\|u_1-u_2\|_{L^1(\Qt)}$.
\end{theorem}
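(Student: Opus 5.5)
The plan is to view $E$ as a given function, solve the resulting linear transport problem explicitly along characteristics, and then close the nonlocal loop with a contraction argument. Concretely, fix $u\in\Uad$ and a candidate crowding trajectory $\eta\in C([0,T];[0,M_E])$. With $\eta$ frozen, the problem
\[
\partial_t x+\partial_l\bigl(g(\eta(t),l)x\bigr)=-\bigl(\mu(\eta(t),l)+u\bigr)x
\]
is linear in $x$, with boundary flux $g(\eta(t),l_0)x(t,l_0)=p(t)$.

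Since $g\ge g_{\min}>0$ and $g$ is Lipschitz in $l$ (Assumption~\ref{ass:lip}), the characteristics $\dot l=g(\eta(t),l)$ are well defined and strictly increasing. Two kinds of characteristics are needed. Those starting at $l_0$ at time $s$ carry the boundary datum $p(s)/g(\eta(s),l_0)$. Those starting at $t=0$ carry $\phi$. Along each one, $x$ is multiplied by the factor $\exp\bigl(-\int(\mu+u+\partial_l g)\,d\tau\bigr)$. This gives an explicit mild solution $x=\Gamma_u(\eta)$, and it is nonnegative because $p,\phi\ge0$. Characteristics leaving through $l_m$ simply exit, since the transport is outflow there.

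The a priori bounds come next, and they must be independent of $\eta$ and $u$. I integrate the equation in $l$ in the mild or weak sense, using nonnegativity, $\mu,u\ge0$, and the fact that the outflow at $l_m$ is nonnegative. This gives
\[
\frac{d}{dt}\|x(t)\|_{L^1}\le p(t),
\]
so $\|x(t)\|_{L^1}\le \|\phi\|_{L^1}+T\|p\|_\infty=:M_1$. It follows that $0\le E\le\|\chi\|_\infty M_1=:M_E$, so the map $\eta\mapsto E[\Gamma_u(\eta)]=\int\chi\,\Gamma_u(\eta)\,dl$ sends the ball $C([0,T];[0,M_E])$ into itself.

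The main obstacle is the contraction estimate, because $\eta$ enters the velocity $g(\eta,l)$ and transport is not Lipschitz in $L^1$ with respect to the velocity unless the data are regular. For $\eta_1,\eta_2$ I would write the equation for $w=x_1-x_2$:
\[
\partial_t w+\partial_l(g_1 w)+(\mu_1+u)w=-\partial_l\bigl((g_1-g_2)x_2\bigr)-(\mu_1-\mu_2)x_2,
\]
together with a boundary mismatch $p\bigl(1/g_1-1/g_2\bigr)$ at $l_0$.

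The term $(\mu_1-\mu_2)x_2$ is directly bounded by $C|\eta_1-\eta_2|\,\|x_2\|_{L^1}$. The divergence term is the difficulty. My first attempt is to avoid it by testing only against $\chi$: the contraction is needed only for $E$, and
\[
\int\chi\,\partial_l\bigl((g_1-g_2)x_2\bigr)\,dl=-\int\chi'(g_1-g_2)x_2\,dl+\text{boundary terms},
\]
which requires $\chi$ Lipschitz. That is acceptable for the calibrated $\chi=\chi l^2$, and I would strengthen the hypothesis to $\chi\in W^{1,\infty}$ if needed. Failing that, I would compare the characteristic flows directly. Flows of Lipschitz fields differ by $C\int|\eta_1-\eta_2|$, so the push-forwards of $x$ differ in bounded-Lipschitz or Wasserstein distance by that amount. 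Pairing with Lipschitz $\chi$ again gives $|E_1-E_2|(t)\le C\int_0^t|\eta_1-\eta_2|$.

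Either route yields a Volterra-type estimate, so some iterate of the map (or the map in a weighted sup-norm $e^{-\gamma t}$) is a contraction. Banach's theorem then gives a unique fixed point $\eta=E^u$, hence existence and uniqueness of $x^u\in\X$. Continuity in time in $L^1$ follows from the characteristic representation by density of continuous data.

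Finally, the Lipschitz dependence of the control-to-state map comes from the same characteristic representation with $\eta$ fixed. Changing $u$ changes the exponential factor by at most $\int|u_1-u_2|$ along characteristics, and the Jacobian is bounded above and below by $g_{\min}$ and $\sup g$. This gives an $L^1$ difference of order $\|u_1-u_2\|_{L^1(\Qt)}$, with a constant depending on the $L^\infty$ bounds of the data. Combined with the $E$-stability estimate and Gronwall, this yields the bound $\|x^{u_1}-x^{u_2}\|_\X\le L_{cs}\|u_1-u_2\|_{L^1(\Qt)}$.
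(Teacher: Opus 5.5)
\textbf{Overall.} Your architecture is the same as the paper's sketch. You freeze $E$, solve the linear problem along characteristics, and use $\|x(t)\|_{L^1}\le\|\phi\|_{L^1}+\int_0^t p$. You close the nonlocal loop by a Volterra contraction for $\Gamma_u(\eta)=\int\chi\,x[\eta]\,dl$; your weighted norm is equivalent to the paper's short-interval iteration. Then you get the control-to-state bound by Gr\"onwall. The paper simply asserts the contraction from Assumption~\ref{ass:lip}. You rightly locate the difficulty in the $\eta$-dependence of the velocity, but neither of your routes closes it, and there is a second gap in the control-to-state step.

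\textbf{The contraction step.} Testing the $w$-equation with $\chi$ also produces $\int\chi' g_1 w\,dl$ and $\int\chi(\mu_1+u)w\,dl$. These are not controlled by $\int\chi w$, so no closed inequality for $|E_1-E_2|$ results. The correct version is duality. Let $\psi$ solve $\partial_s\psi+g_1\partial_l\psi=(\mu_1+u)\psi$ on $(0,t)$ with $\psi(t,\cdot)=\chi$ and $\psi(\cdot,l_m)=0$. The $l_0$ boundary term vanishes because $g_1x_1=g_2x_2=p$ there, so
\[
\int_{l_0}^{l_m}\chi\,w(t,l)\,dl=\int_0^t\!\int_{l_0}^{l_m}\bigl[(g_1-g_2)\,x_2\,\partial_l\psi-(\mu_1-\mu_2)\,x_2\,\psi\bigr]\,dl\,ds ,
\]
and one needs $\partial_l\psi\in L^\infty$. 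Along characteristics $\psi=\chi(X)\exp\bigl(-\int(\mu_1+u)\bigr)$, which is not Lipschitz in $l$ for a merely measurable $u\in\Uad$. The same defect breaks your Wasserstein route: $x_1,x_2$ are not push-forwards of one measure, since they carry survival weights $e^{-\int u}$ evaluated along different paths.

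This is not a technicality. Let $u(t,l)=f(s_2(t,l))$, where $s_2$ is the entry time of the $\eta_2$-characteristic through $(t,l)$ and $f$ oscillates between $0$ and $u_{\max}$ on a scale $h$.
\begin{itemize}
\item Then $u$ is constant along $\eta_2$-characteristics.
\item Characteristics for $\eta_1=\eta_2+\delta$ (with $\partial_E g\neq0$) drift across them by an amount proportional to $\delta t$. Once this exceeds $h$, they see only the mean of $f$.
\item By strict convexity of $U\mapsto e^{-U}$, $\Gamma_u(\eta_2)(t)-\Gamma_u(\eta_1)(t)$ is then of order $u_{\max}^2t^3$, independently of $\delta$.
\end{itemize}
So no contraction constant uniform in $u$ exists. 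Superposing scales $h_k=2^{-k}$ with amplitudes $a_k\propto k^{-2}$ (normalized so $\sum_k a_k\le u_{\max}$) gives a single admissible $u$ with $|\Gamma_u(\eta_1)-\Gamma_u(\eta_2)|\gtrsim\sum_{2^{-k}\lesssim\delta}a_k^2\sim\log^{-3}(1/\delta)$. Hence $\Gamma_u$ is not Lipschitz on any interval, and the Banach step is unavailable.

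\textbf{The control-to-state step.} You invoke ``the $L^\infty$ bounds of the data'', but only $\phi\in L^1_+$ is assumed. Without that bound, the $L^1(\Qt)$-Lipschitz claim is false even when $g,\mu$ do not depend on $E$. Take $\phi\sim|l-l^*|^{-1/2}$ near $l^*$, and $u_2-u_1=u_{\max}$ on a tube of width $h$ around the characteristic issuing from $l^*$. Then $\|u_1-u_2\|_{L^1(\Qt)}\sim h$, but $\|x^{u_1}(t)-x^{u_2}(t)\|_{L^1}\gtrsim\int_{|l-l^*|\lesssim h}\phi\,dl\sim h^{1/2}$. A Gr\"onwall estimate in which $u$ enters as a bounded zeroth-order term, as in the paper's sketch, only gives $\int|u_1-u_2|\,x_2\,dl\le\|u_1-u_2\|_{L^\infty}\|x_2\|_{L^1}$. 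That is Lipschitz dependence in $L^1(0,T;L^\infty)$, not in $L^1(\Qt)$.

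\textbf{What would make your argument work.} It goes through under stronger hypotheses:
\begin{itemize}
\item $\phi\in L^\infty$. This also handles the exit boundary: when $\chi(l_m)\ne0$, the $\Ord(\delta)$-wide strip of characteristics that have left through $l_m$ under one flow but not the other carries $\Ord(\delta)$ mass only if the density there is bounded.
\item $\chi\in W^{1,\infty}$, as you propose.
\item Regularity of $u$ across characteristics, e.g.\ $\sup_t\mathrm{TV}(u(t,\cdot))<\infty$. This includes the threshold policies $u_{\max}\mathbf{1}_{\{l>\ell(t)\}}$ of Section~\ref{subsec:threshold}.
\end{itemize}
Under these, set $U_i(s)=\int_s^t u(\tau,L_i(\tau;s))\,d\tau$; since $s\mapsto L_i(\tau;s)$ is bi-Lipschitz, Fubini gives
\[
\int_0^t\bigl|U_1(s)-U_2(s)\bigr|\,ds\le C\int_0^t\mathrm{TV}\bigl(u(\tau,\cdot)\bigr)\,\sup_s\bigl|L_1(\tau;s)-L_2(\tau;s)\bigr|\,d\tau=\Ord\Bigl(\int_0^t|\eta_1-\eta_2|\Bigr),
\]
and the Volterra contraction closes. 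Alternatively, if $g$ is independent of $E$, the characteristics are frozen and the contraction is immediate with $\chi\in L^\infty$ only.
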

\begin{proof}[Proof sketch]
For frozen $E(\cdot)\in C([0,T];[0,M_E])$ the problem is linear transport; since
$g\ge g_{\min}>0$ the characteristics $\dot L=g(E(t),L)$ are well defined and reach
$l_m$ in finite size-transit time $\tau\le\int_{l_0}^{l_m}d\xi/g_{\min}$. The
Duhamel/characteristic formula gives a unique nonnegative mild solution with
$\|x(t)\|_{L^1}\le\|\phi\|_{L^1}+\int_0^t p$, refined to a uniform bound by exit at
$l_m$ and $\mu+u\ge0$. The nonlocal map $\Gamma:E(\cdot)\mapsto\int\chi x[E]$ is a
contraction on a short interval (Assumption~\ref{ass:lip} + Gr\"onwall); the uniform
bounds let one iterate to cover $[0,T]$. Lipschitz dependence follows from a
Gr\"onwall estimate on the difference, in which $u$ enters only through the bounded
zeroth-order term. Cf.\ \cite{webb_theory_1985,anita_analysis_2000,liu2026ftu}.
\end{proof}

\subsection{Differentiability of the control-to-state map}
\begin{proposition}[Linearized sensitivity]\label{prop:diff}
$u\mapsto x^u$ is G\^ateaux differentiable from $(\Uad,L^\infty(\Qt))$ to $\X$;
for a feasible direction $v$ the sensitivity $z=Dx^u\!\cdot v$ uniquely solves
\[
\partial_t z+\partial_l(g z+x\,\partial_E g\,\zeta)=-(\mu+u)z-x\,\partial_E\mu\,\zeta-v x,
\quad \zeta(t)=\int_{l_0}^{l_m}\chi z\,dl,
\]
with $z(0,\cdot)=0$ and linearized inflow
$g(E,l_0)z(t,l_0)+x(t,l_0)\partial_E g(E,l_0)\zeta(t)=0$.
\end{proposition}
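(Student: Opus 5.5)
The proof will be a difference-quotient argument. Fix $u\in\Uad$ and a feasible direction $v$, meaning $u+\theta v\in\Uad$ for small $\theta>0$. Write $x_\theta=x^{u+\theta v}$, $E_\theta=\int\chi x_\theta$, and set $z_\theta=(x_\theta-x)/\theta$, $\zeta_\theta=(E_\theta-E)/\theta$. The plan has three steps. First, show that the linearized problem has a unique solution $z\in\X$. Second, derive an exact equation for $z_\theta$ that differs from the linearized one only by remainder terms. Third, show that these remainders vanish in $\X$ as $\theta\to0$, and conclude with a Gr\"onwall estimate on $w_\theta=z_\theta-z$. Throughout I will use the uniform bounds $\|x^u(t)\|_{L^1}\le M_1$ and $E^u\in[0,M_E]$ from Theorem~\ref{thm:wp}, together with the Lipschitz constants of $g,\mu,\partial_Eg,\partial_E\mu$ on $[0,M_E]\times[l_0,l_m]$ from Assumption~\ref{ass:lip}.

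\textbf{Step 1 (linearized problem).} The linearized equation is linear transport along the same characteristics $\dot L=g(E(t),L)$ as the state. It carries a scalar nonlocal unknown $\zeta(t)$ that enters in three places: the source $-x\,\partial_E\mu\,\zeta$, the flux correction $\partial_l(x\,\partial_Eg\,\zeta)$, and the boundary datum $z(t,l_0)=-x(t,l_0)\partial_Eg(E,l_0)\zeta(t)/g(E,l_0)$. I will treat it as a fixed point in $\zeta\in C([0,T])$. For given $\zeta$, expand the flux term as $\zeta\,\partial_l(x\,\partial_Eg)$. Since $x$ is only in $L^1$, this is a distribution, so I will define the solution in mild form via integration along characteristics. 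Equivalently, I can move the term to the left and work with $y=z+\zeta(t)\,x\,\partial_Eg/g$, which removes the spatial derivative of $x$ at the cost of a $\dot\zeta$ term. Or I can use the weak (dual) formulation against test functions transported by the backward characteristics. I plan to use the weak formulation, since it also matches the adjoint of Proposition~\ref{prop:necessary}. The map $\zeta\mapsto\int\chi z[\zeta]$ is then affine and Lipschitz with constant $C\,t$ on $[0,t]$. It is a contraction for short times, and the linear bounds let me iterate over $[0,T]$, as in Theorem~\ref{thm:wp}.

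\textbf{Step 2 (equation for the difference quotient).} Subtract the state equations for $x_\theta$ and $x$ and divide by $\theta$. By the mean value theorem in $E$,
\[
\frac{g(E_\theta,l)x_\theta-g(E,l)x}{\theta}=g(E,l)z_\theta+x\,\partial_Eg(E,l)\zeta_\theta+r^g_\theta ,
\]
with remainder
\[
r^g_\theta=\bigl(g(E_\theta,l)-g(E,l)\bigr)z_\theta+x\bigl(\partial_Eg(\tilde E,l)-\partial_Eg(E,l)\bigr)\zeta_\theta .
\]
The $\mu$ term splits the same way. The control term gives $-uz_\theta-vx_\theta=-uz_\theta-vx+O(\theta)$. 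The boundary condition yields the linearized boundary law plus the analogous remainder. From the Lipschitz estimate of Theorem~\ref{thm:wp} I get $\|z_\theta\|_\X\le L_{cs}\|v\|_{L^1}$ and $|\zeta_\theta|\le\|\chi\|_\infty L_{cs}\|v\|_{L^1}$ uniformly in $\theta$, and hence $|E_\theta-E|=O(\theta)$. Every remainder therefore has the form (an $O(\theta)$ Lipschitz factor) times (a bounded quantity), so it is $O(\theta)$ in $L^1(\Qt)$, or in the weak sense for the flux remainders.

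\textbf{Step 3 (convergence and main obstacle).} Now $w_\theta=z_\theta-z$ solves the linearized system of Step 1 with forcing and boundary data of size $O(\theta)$. Stability of that linear system then gives $\|w_\theta\|_\X\le C\theta\to0$, which proves Gâteaux differentiability with derivative $z$. Uniqueness of $z$ follows from Step 1. Linearity and boundedness of $v\mapsto z$ follow from linearity of the system and the same estimate.

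I expect the main obstacle to be the flux remainder $\partial_l(r^g_\theta)$, and the corresponding $\partial_l(x\,\partial_Eg\,\zeta)$ in Step 1. These involve spatial derivatives of functions that are only $L^1$, so an $L^1$ energy estimate does not apply directly. I will handle them in the weak/characteristic formulation. There the derivative is transferred onto smooth test functions, or absorbed into the perturbation of the characteristic flow. The $l$-Lipschitz regularity of $g$ and $\partial_Eg$ in Assumption~\ref{ass:lip} then bounds the change of characteristics, and of their Jacobian, by $C|E_\theta-E|$. This gives convergence in $C([0,T];L^1)$ via a density argument: approximate the data by smooth functions, use the uniform bounds, and pass to the limit.
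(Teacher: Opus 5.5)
The paper states Proposition~\ref{prop:diff} without proof, so there is no argument to compare against. Judged on its own, your plan has a genuine gap, and it sits exactly at the obstacle you flag in Step~3. The difficulty is not technical. The flux term $\partial_l(x\,\partial_Eg\,\zeta)$ encodes the first-order displacement $\sigma$ of the characteristics caused by $\delta E$, and the derivative genuinely contains $-\sigma\,\partial_l x$. That term lies in $L^1$ only if $x(t,\cdot)\in W^{1,1}$.

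Here is a concrete counterexample. Take $\chi\equiv1$, $\mu\equiv\mu_0$, an $l$-independent $g=g(E)\ge g_{\min}$ with $g'<0$, $p\equiv0$, $u\equiv0$, $v\equiv1$, and $\phi=1_{[a,b]}$ with $l_0<a<b<l_m$. For small $t>0$ one has $x(t,l)=e^{-\mu_0t}\phi(l-s(t))$ with $s(t)=\int_0^tg(E(\tau))\,d\tau$. Likewise $x_\theta(t,l)=e^{-(\mu_0+\theta)t}\phi(l-s_\theta(t))$ with $s_\theta=s+\theta\sigma+O(\theta^2)$, where $\sigma(t)=\int_0^tg'(E(\tau))\,\zeta(\tau)\,d\tau>0$ and $\zeta(\tau)=-\tau N(\tau)$.

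Hence $z_\theta(t)$ converges only weakly-$*$, to the measure $-t\,x(t)+e^{-\mu_0t}\sigma(t)\bigl(\delta_{b+s(t)}-\delta_{a+s(t)}\bigr)=-t\,x(t)-\sigma(t)\,\partial_lx(t)$. Moreover $\|z_\theta(t)-z_{\theta/2}(t)\|_{L^1}\approx 2e^{-\mu_0t}\sigma(t)$, which does not tend to $0$. So under the stated hypotheses ($\phi\in L^1_+$, $p\in L^\infty_+$, arbitrary $u\in\Uad$) the map $u\mapsto x^u$ is not G\^ateaux differentiable into $\X$. For $\phi\notin\mathrm{BV}$, even the uniform bound $\|z_\theta\|_\X\le L_{cs}\|v\|_{L^1}$ that you import from Theorem~\ref{thm:wp} fails.

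The same failure arises in three further ways:
\begin{itemize}
\item a jump of $p$ in time;
\item a corner incompatibility $g(E(0),l_0)\phi(l_0)\neq p(0)$, which creates a jump along the characteristic issuing from $(0,l_0)$, and that characteristic moves with $E$;
\item a base control that is rough transversally to the characteristics. For example, if $u$ is constant on each unperturbed characteristic with a jump across one of them, the survival factor $\exp(-\int u)$ inherits the jump.
\end{itemize}
Your density argument cannot repair this. Any $\X$-bound on the linearized solutions must control $\sigma\,\partial_l x$, i.e.\ $\|\partial_lx\|_{L^1}$. Along an $L^1$ approximation of rough data this blows up, or, for $\mathrm{BV}$ data, stays bounded but yields a measure-valued weak-$*$ limit. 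Likewise, the substitution $y=z+\zeta x\,\partial_Eg/g$ does not remove the derivative of $x$, because $\partial_ty$ brings back $\partial_tx=-\partial_l(gx)-(\mu+u)x$.

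What is missing is a hypothesis that keeps $x(t,\cdot)$ bounded in $W^{1,1}$ uniformly on $[0,T]$. One sufficient set is:
\begin{itemize}
\item $\phi\in W^{1,1}$ and $p\in W^{1,1}(0,T)$;
\item the compatibility condition $g(E(0),l_0)\phi(l_0)=p(0)$;
\item $\partial_lg$ Lipschitz, and $\partial_l\partial_Eg$, $\partial_l\mu$ bounded;
\item a base control whose switching set is crossed transversally by the characteristics. This holds for a threshold control $u_{\max}1_{\{l>\ell(t)\}}$ with $\ell$ Lipschitz and $\dot\ell\neq g(E,\ell)$, such as the stationary rule \eqref{eq:threshold_policy_stationary}.
\end{itemize}
With these, $\zeta\,\partial_l(x\,\partial_Eg)$ is an honest $C([0,T];L^1)$ source, and your Step~1 fixed point in $\zeta$ goes through.

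In Step~3, however, the rate $\|w_\theta\|_\X\le C\theta$ would still require $x\in W^{2,1}$. The reason is that $\partial_lr^g_\theta$ contains $(g(E_\theta,\cdot)-g(E,\cdot))\,\partial_lz_\theta$, and $z_\theta$ already involves $\partial_lx$. With only $W^{1,1}$ regularity you get $w_\theta\to0$ without a rate. The cleanest route is the characteristic representation:
\begin{enumerate}
\item show that $\theta\mapsto E_\theta$ is differentiable in $C([0,T])$ by a scalar fixed point;
\item differentiate the flow $X_\theta$;
\item use that the $L^1$ difference quotients of $\psi\circ X_\theta$ are controlled by $\|\psi'\|_{L^1}$.
\end{enumerate}
Then approximating the data in $W^{1,1}$, not in $L^1$, closes your density argument. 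Without such extra hypotheses the proposition is false as stated and has to be modified rather than proved.
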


\subsection{Existence of an optimal control}
\begin{theorem}[Existence]\label{thm:exist}
$\sup_{u\in\Uad}J[u]$ is attained by some $\hat u\in\Uad$.
\end{theorem}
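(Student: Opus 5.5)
The plan is the direct method of the calculus of variations on the finite-horizon set $Q_T$. We work with the truncated objective $J[u]=\int_0^T e^{-rt}\int_{l_0}^{l_m} c\,u\,x^u\,dl\,dt$; the infinite-horizon case is treated at the end.

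\textbf{Boundedness.} By Theorem~\ref{thm:wp}, $\|x^u(t)\|_{L^1}\le M_1$ uniformly in $u\in\Uad$. Since $c$ is continuous, hence bounded on $[l_0,l_m]$, we get $0\le J[u]\le \|c\|_\infty u_{\max} M_1 T$. So $J^\ast:=\sup_{\Uad}J$ is finite, and we may fix a maximizing sequence $u_n$ with $J[u_n]\to J^\ast$.

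\textbf{Compactness of controls and states.} $\Uad$ is weak-$\ast$ sequentially compact, so after passing to a subsequence $u_n\overset{\ast}{\rightharpoonup}\hat u\in\Uad$. The Lipschitz estimate of Theorem~\ref{thm:wp} is stated in the $L^1$ norm, which does not see weak-$\ast$ limits, so the compactness must come from the states themselves. The key step is to show that the crowding indices $E_n=E^{u_n}$ are bounded by $M_E$ and equicontinuous in $t$. For equicontinuity, test the equation with $\chi$, or with a Lipschitz regularization of $\chi$, and use $|\tfrac{d}{dt}\int\chi x|\le C(\|p\|_\infty+M_1)$. This gives $|E_n(t)-E_n(s)|\le C|t-s|$ with $C$ independent of $n$. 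Arzel\`a--Ascoli then yields $E_n\to\bar E$ uniformly on $[0,T]$ after a further subsequence.

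\textbf{Passage to the limit — the main obstacle.} With $E_n\to\bar E$ uniform, the characteristics $\dot L=g(E_n(t),L)$ converge uniformly to those of $\bar E$, by Assumption~\ref{ass:lip} and Gr\"onwall. In the characteristic (Duhamel) formula, $x_n$ equals the inflow or initial datum transported along these curves and multiplied by $\exp(-\int(\mu(E_n)+\partial_l g+u_n)\,ds)$. The only term that converges merely weakly is $\int u_n\,ds$ along a characteristic. Since integration along a family of curves that converge uniformly is a continuous linear functional of $u$ in the appropriate sense, $\int u_n\,ds\to\int\hat u\,ds$ for a.e.\ starting point. The exponential is continuous in that quantity, and the resulting integrands are bounded, so dominated convergence gives $x_n\to\bar x$ strongly in $L^1(Q_T)$. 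Here $\bar x$ is the mild solution with control $\hat u$ and environment $\bar E$. Consistency, $\bar E=\int\chi\bar x$, holds in the limit, and uniqueness in Theorem~\ref{thm:wp} gives $\bar x=x^{\hat u}$.

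\textbf{Upper semicontinuity of $J$.} The product $u_n x_n$ converges weakly in $L^1$, since it is (weak-$\ast$) $\times$ (strong $L^1$) with a uniform $L^\infty$ bound on $u_n$. Testing against $e^{-rt}c(l)\in L^\infty$ gives $J[u_n]\to J[\hat u]$, so $J[\hat u]=J^\ast$ and the supremum is attained. No convexity of $J$ in $u$ is needed, because the control enters the state only linearly through the exponential weight.

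\textbf{Infinite horizon.} For the genuinely infinite-horizon $J$, repeat the argument on $[0,k]$ and extract a diagonal subsequence, which gives $\hat u$ on $[0,\infty)$. The tail bound $e^{-rk}\bar\rho/r$ from Proposition~\ref{prop:trunc} makes the truncated objectives converge uniformly in $u$, so the limit is again optimal.
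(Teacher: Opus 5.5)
Your outline is the paper's: take the weak-$\ast$ limit $\hat u$ of a maximizing sequence, prove $x^{u_n}\to x^{\hat u}$ strongly, then pass to a weak$\times$strong limit in $J$. You use Arzel\`a--Ascoli on $E_n$ where the paper uses Aubin--Lions on $x_n$. The step that identifies the limit is false, however, and it is the crux.

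\textbf{The gap.} Weak-$\ast$ convergence in $L^\infty(\Qt)$ does not control integrals along individual curves, so $\int_\gamma u_n\,ds\to\int_\gamma\hat u\,ds$ for a.e.\ starting point does not follow. Even weak convergence of these line integrals, as functions of the starting point, does not survive the exponential.

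Take $g\equiv g_0$, $\mu\equiv 0$, $\phi\equiv 0$, $p\equiv p_0>0$, all admissible under Assumption~\ref{ass:standing}. Let $\sigma=t-(l-l_0)/g_0$ be the entry time of the cohort through $(t,l)$, and set $u_n=u_{\max}$ where $\sin(n\sigma)>0$ and $u_n=0$ otherwise. Then $u_n\overset{\ast}{\rightharpoonup}\hat u\equiv u_{\max}/2$. For $t>(l-l_0)/g_0$, however, $x_n$ alternates on strips of width $\Ord(1/n)$ between $p_0/g_0$ and $(p_0/g_0)e^{-2a}$, where $a=u_{\max}(l-l_0)/(2g_0)$. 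Consequently:
\begin{itemize}
\item no subsequence of $x_n$ converges strongly;
\item the weak limit $\tfrac{p_0}{2g_0}(1+e^{-2a})$ strictly exceeds $x^{\hat u}=\tfrac{p_0}{g_0}e^{-a}$;
\item $u_nx_n\rightharpoonup\tfrac{u_{\max}p_0}{2g_0}e^{-2a}\neq\hat u\,x^{\hat u}$.
\end{itemize}

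Your remark that no convexity is needed is backwards. The survival factor is strictly convex in $\int_\gamma u$, and this Jensen gap is what breaks the argument.

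Nor can you settle for an inequality. Suppose $e^{-rt}c(l)$ increases along characteristics, which is the regime of interest here. Mixing, across cohorts, a brief early harvest with an equally brief later one gives $\lim J[u_n]>J[\hat u]$, because $\hat u$ thins every cohort before the more valuable late harvest. So $J$ is not weak-$\ast$ upper semicontinuous, and nothing forces $J[\hat u]=\sup_{\Uad}J$.

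The paper's sketch asserts the same thing: Aubin--Lions compactness of $x^{u_n}$ and weak-$\ast$-to-strong continuity of the survival factor. The example above refutes that too, since the states inherit the transverse oscillations of the controls.

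\textbf{The repair.} Keep your best ingredient, the Arzel\`a--Ascoli compactness of the scalar $E_n$; compactness genuinely lives there, not in $x_n$. (For $\phi$ only in $L^1$, your Lipschitz bound omits the outflux $\chi(l_m)g\,x_n(t,l_m)$, which is only equi-integrable in $t$; equicontinuity still follows.) Then replace the control by the harvest flux $h_n=u_nx_n$, and argue as follows.
\begin{enumerate}
\item The characteristic representation makes $x_n$ and $0\le h_n\le u_{\max}x_n$ equi-integrable. Along a subsequence, $x_n\rightharpoonup\bar x$ and $h_n\rightharpoonup\bar h$ weakly in $L^1(\Qt)$.
\item Since $g(E_n,\cdot)\to g(\bar E,\cdot)$ and $\mu(E_n,\cdot)\to\mu(\bar E,\cdot)$ uniformly, the weak formulation passes to the limit. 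It is \emph{linear} in $(x,h)$ once $E$ is frozen, giving $\partial_t\bar x+\partial_l\bigl(g(\bar E,l)\bar x\bigr)=-\mu(\bar E,l)\bar x-\bar h$ with inflow $p$, and $\bar E(t)=\int\chi\,\bar x(t,l)\,dl$.
\item The constraint $0\le h\le u_{\max}x$ is convex and weakly closed, so it survives the limit. Set $\tilde u=\bar h/\bar x$ on $\{\bar x>0\}$ and $\tilde u=0$ elsewhere.
\item Then $\tilde u\in\Uad$, $\bar x=x^{\tilde u}$ by uniqueness in Theorem~\ref{thm:wp}, and $J[\tilde u]=\int_0^T\!\int e^{-rt}c\,\bar h\,dl\,dt=\lim J[u_n]=\sup_{\Uad}J$.
\end{enumerate}
The maximizer is $\tilde u$, which generally differs from $\hat u$. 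For the sequence above, $\tilde u=u_{\max}e^{-2a}/(1+e^{-2a})$ where $\bar x>0$.

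Your diagonal argument for the infinite horizon then goes through, with one correction. Proposition~\ref{prop:trunc} assumes $\mu\ge\mu_{\min}>0$, which is not a standing assumption. Use instead that every individual leaves $[l_0,l_m]$ within the transit time $\tau$, so $\|x^u(t)\|_{L^1}\le\|\phi\|_{L^1}+\tau\|p\|_\infty$ uniformly in $t$ and $u$.
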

\begin{proof}
$J$ is bounded above (Theorem~\ref{thm:wp}, $c\le c_0 l_m^3$). For a maximizing
sequence, $u_n\rightharpoonup^* \hat u\in\Uad$ (weak-$\ast$ compactness). The states are
uniformly bounded in $L^\infty(0,T;L^1)$ and, by the transport structure and
Assumption~\ref{ass:lip}, equi-integrable and time-equicontinuous; an
Aubin--Lions / structured-population compactness argument \cite{anita_analysis_2000}
gives $x^{u_n}\to x^{\hat u}$ strongly in $L^1(\Qt)$. The only $u$-dependence of the
state is the survival factor $\exp(-\int_\gamma u)$ along characteristics, whose
dependence on $u$ is weak-$\ast$-to-strong continuous along the precompact states.
Hence
$J[u_n]=\int e^{-rt}\!\int c\,u_n x^{u_n}\to\int e^{-rt}\!\int c\,\hat u x^{\hat u}=J[\hat u]$,
since $c x^{u_n}\to c x^{\hat u}$ strongly and $u_n\rightharpoonup^* \hat u$.
\end{proof}

\subsection{Rigorous Pontryagin characterization}
\begin{theorem}[Necessary conditions]\label{thm:pmp}
For optimal $(\hat u,\hat x,\hat E)$ the backward adjoint \eqref{eq:adjoint} with
$\lambda(T,\cdot)=0$, $\lambda(\cdot,l_m)=0$ has a unique solution
$\lambda\in L^\infty(\Qt)\cap C([0,T];L^1)$, and
\[
J'[\hat u]v=\int_0^T e^{-rt}\!\int_{l_0}^{l_m}\Phi\,v\,dl\,dt,\qquad \Phi=\hat x(c-\lambda).
\]
Thus $\int e^{-rt}\!\int\Phi(v-\hat u)\le0$ for all $v\in\Uad$, forcing
$\hat u=u_{\max}$ where $\Phi>0$ and $\hat u=0$ where $\Phi<0$, a.e., with arbitrary
values on $\{\Phi=0\}$.
\end{theorem}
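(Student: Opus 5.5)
The argument is the standard adjoint-based derivation of the Gâteaux derivative of $J$, made rigorous with the regularity from Theorem~\ref{thm:wp} and Proposition~\ref{prop:diff}, on the finite horizon $Q_T$.

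\textbf{Step 1: solve the adjoint.} For fixed optimal $(\hat u,\hat x,\hat E)$, I treat \eqref{eq:adjoint} with $\lambda(T,\cdot)=0$ and $\lambda(\cdot,l_m)=0$ as a linear backward transport equation with a nonlocal rank-one source $\chi(l)\,\mathcal C[\lambda](t)$. Along backward characteristics of $\dot L=g(\hat E(t),L)$ the mild formulation is a Volterra-type fixed point. The characteristics are well defined because $g\ge g_{\min}$ and $g$ is Lipschitz by Assumption~\ref{ass:lip}.

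The delicate term is $\partial_s\lambda\,\partial_E g$ inside $\mathcal C$, which involves a derivative of the unknown. I would first integrate it by parts in $s$, moving the derivative onto $\hat x\,\partial_E g$. The boundary terms use $\lambda(t,l_m)=0$ and the inflow relation at $l_0$. After this the coupling becomes $\int \lambda\,\partial_s(\hat x\partial_Eg)\,ds$ plus a boundary term $\lambda(t,l_0)\hat x(t,l_0)\partial_Eg(\hat E,l_0)$.

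This is where I expect the main obstacle. The step needs $\hat x(t,\cdot)$ to be of bounded variation, or the duality has to be interpreted weakly in $L^\infty$ against the $L^1$ state. I would propagate BV regularity of $\hat x$ from BV data via the characteristic formula. If that fails, I would regularize $\partial_Eg$, solve, and pass to the limit using uniform $L^\infty$ bounds.

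Once the coupling is bounded on $L^\infty$, a Gr\"onwall estimate on short backward intervals, iterated over $[0,T]$, gives a unique $\lambda\in L^\infty(\Qt)\cap C([0,T];L^1)$. Uniqueness follows from the same contraction.

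\textbf{Step 2: the duality identity.} For a feasible direction $v$, the derivative of $J$ is
\[
J'[\hat u]v=\int_0^T\!\!e^{-rt}\!\int (c\,v\,\hat x + c\,\hat u\,z)\,dl\,dt,
\]
where $z$ is given by Proposition~\ref{prop:diff}. I would test the linearized equation for $z$ against $e^{-rt}\lambda$ and integrate over $\Qt$. The boundary terms are handled as follows:
\begin{itemize}
\item the $t=0$ term vanishes since $z(0)=0$;
\item the $t=T$ term vanishes since $\lambda(T)=0$;
\item the $l_m$ term vanishes since $\lambda(\cdot,l_m)=0$;
\item the $l_0$ flux term vanishes by the linearized inflow condition.
\end{itemize}
The nonlocal $\zeta$-terms regroup exactly into $\int\chi z\,dl\cdot\mathcal C[\lambda]$, reproducing the formal computation in Proposition~\ref{prop:necessary}. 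Subtracting the adjoint equation tested against $z$ cancels every $z$-term except $c\hat u z$. What remains is $J'[\hat u]v=\int e^{-rt}\int \hat x(c-\lambda)v$. I would justify these manipulations first for smooth approximations, then by density using the $C([0,T];L^1)\times L^\infty$ pairing.

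\textbf{Step 3: the bang--bang conclusion.} Convexity of $\Uad$ makes $v-\hat u$ a feasible direction for every $v\in\Uad$. Optimality then gives $J'[\hat u](v-\hat u)\le0$.

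To get the pointwise rule, I would take $v=u_{\max}$ on a measurable set $A\subset\{\Phi>0\}$ and $v=\hat u$ elsewhere. This yields $\int_A e^{-rt}\Phi(u_{\max}-\hat u)\le0$. Since $e^{-rt}\Phi>0$ on $A$ and $u_{\max}-\hat u\ge0$, this forces $\hat u=u_{\max}$ a.e.\ on $\{\Phi>0\}$. Symmetrically, $v=0$ on $\{\Phi<0\}$ forces $\hat u=0$ there. On $\{\Phi=0\}$ the inequality carries no information, so $\hat u$ is arbitrary there.
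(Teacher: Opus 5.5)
Your outline matches the paper's own proof sketch. You solve the adjoint along characteristics, test the sensitivity equation of Proposition~\ref{prop:diff} against $e^{-rt}\lambda$ (removing the four boundary terms exactly as you list), and then use the variational inequality. Steps~2 and~3 are correct as computations, and your set-variation argument is a sound way to make the pointwise maximization precise.

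The gap is in Step~1, at exactly the point you flag, and neither of your remedies closes it.

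\emph{Propagating BV fails.} Nothing is known about the optimal control beyond $\hat u\in\Uad$. The characteristic formula multiplies the data by $\exp\bigl(-\int_\gamma(\mu+\hat u)\bigr)$. Take the admissible control $\hat u=u_{\max}\mathbf{1}_A(a)$, constant along each characteristic with label $a$, where $A$ is a union of infinitely many disjoint intervals accumulating at a point. Then $\hat x(t,\cdot)$ has infinitely many jumps of size bounded below, hence infinite variation, even for constant $\phi$.

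\emph{BV alone would not suffice either.} If $\hat x(t,\cdot)$ were BV, $\partial_s(\hat x\,\partial_E g)$ would be a measure with atoms at the jumps of $\hat x$. But $\lambda(t,\cdot)$, whose source contains $c\hat u$ and $\hat u\lambda$, jumps at the same points, so the pairing $\int\lambda\,\partial_s(\hat x\,\partial_E g)\,ds$ has no meaning.

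\emph{Your fallback does not touch the difficulty.} By Assumption~\ref{ass:lip}, $\partial_E g$ is already Lipschitz in $l$. The missing derivative falls on $\hat x$ (or, before your integration by parts, on $\lambda$). So the uniform bounds your Gr\"onwall step needs are really uniform $W^{1,1}$ bounds on $\hat x(t,\cdot)$. Mollifying $\hat x$ instead would make exactly those bounds blow up.

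The same derivative loss is hidden in Proposition~\ref{prop:diff}: its flux $\partial_l(\hat x\,\partial_E g\,\zeta)$ contains $\partial_l\hat x$. Hence both $z\in C([0,T];L^1)$ and your density argument in Step~2 rest on the same missing regularity. The paper's sketch only says ``Theorem~\ref{thm:wp} run backward'' and does not address this either, so you have spotted a real weakness. The repair, however, needs an added hypothesis, not a sharper estimate.

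One workable hypothesis is $\hat x\in L^\infty(0,T;W^{1,1}(l_0,l_m))$. One can expect this for $W^{1,1}$ data satisfying the corner compatibility $g(\hat E(0),l_0)\phi(l_0)=p(0)$, together with a threshold-type $\hat u$ whose switching curve is transversal to the characteristics. Under it, your integration by parts bounds the coupling by $C\bigl(\|\hat x(t)\|_{W^{1,1}}+p(t)\bigr)\sup_l|\lambda(t,l)|$. The backward Volterra/Gr\"onwall argument then closes, and Steps~2--3 go through unchanged.

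Alternatively, in the case $\partial_E g\equiv0$ (crowding acting only through $\mu$), the coupling is bounded by $\|\hat x(t)\|_{L^1}\|\partial_E\mu\|_\infty\|\lambda\|_\infty$, and no extra hypothesis is needed.
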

\begin{proof}[Proof sketch]
Adjoint well-posedness is Theorem~\ref{thm:wp} run backward. Using
Proposition~\ref{prop:diff} and integration by parts (boundary terms at $l_0,l_m$
vanish by the inflow constraint and $\lambda(\cdot,l_m)=0$, as in
Proposition~\ref{prop:necessary}), the $\zeta$-terms cancel against the adjoint
source, leaving $J'[\hat u]v=\iint e^{-rt}\Phi v$. The variational inequality and
pointwise maximization over $[0,u_{\max}]$ give the rule.
\end{proof}

\bibliography{reference}

\end{document}